\documentclass[12pt]{amsart}

\usepackage{amscd} 
\usepackage{mathrsfs}

\textwidth=165mm \textheight=220mm \hoffset=-20mm \voffset=-5mm

\newtheorem{theorem}{Theorem}[section]
\newtheorem{lemma}[theorem]{Lemma}
\newtheorem{proposition}[theorem]{Proposition}
\newtheorem{corollary}[theorem]{Corollary} 
\theoremstyle{definition}

\newtheorem{remark}[theorem]{Remark} 
\numberwithin{equation}{section}

\def\A{\mathcal A}
\def\H{\mathscr H}
\def\fh{\mathfrak h}

\def\N{\mathscr N}
\def\D{\mathscr D}
\def\RE{\mathbb R}
\def\C{{\mathbb C}}

\def\t {\tilde}

\begin{document}

\title[Nonlinear extensions of symmetric operators ]
{Nonlinear Maximal Monotone extensions of symmetric operators}

\author{Andrea Posilicano}

\address{DiSAT, Universit\`a dell'Insubria, I-22100
Como, Italy}

\email{posilicano@uninsubria.it}

\begin{abstract}
Given a linear semi-bounded symmetric operator $S\ge -\omega$, we explicitly
define, and provide their nonlinear resolvents, nonlinear maximal monotone
operators $A_\Theta$ of type $\lambda>\omega$  (i.e. generators of one-parameter continuous nonlinear semi-groups of contractions of type $\lambda$) which coincide with the Friedrichs extension of $S$ on a convex set containing the domain of $S$.
The extension parameter $\Theta\subset{\mathfrak h}\times{\mathfrak h}$ ranges over the set of nonlinear
maximal monotone relations in an auxiliary Hilbert space $\mathfrak h$ isomorphic to the deficiency subspace of $S$. 
Moreover $A_{\Theta}+\lambda$ is a sub-potential operator (i.e. is the sub-differential of a
lower semicontinuous convex function) whenever ${\Theta}$ is sub-potential. Applications to Laplacians with nonlinear singular perturbations supported on null sets and to Laplacians with nonlinear boundary conditions on a bounded set are given. 
\vskip10pt\noindent
Keywords: Nonlinear Extensions, Nonlinear Resolvent Formulae, 
Nonlinear Boundary Conditions, Nonlinear Singular Perturbations
\vskip8pt\noindent
MSC: 47H05, 35J65, 35J87
\end{abstract}

\maketitle
\section{Introduction}
Let $S:\D(S)\subseteq\H\to\H$ be a lower semi-bounded symmetric operator on the Hilbert space $\H$. 
The famed  
Birman-Kre\u\i n-Vishik theory (\cite{[Kre]}, \cite{[Vis]}, \cite{[Bir]}) gives all its lower semi-bounded self-adjoint extensions; here we would like to provide a nonlinear analogue of this theory. First of all we need to define which kind of nonlinear extensions we are looking for. In the linear case, by spectral calculus, we know that the self-adjoint operator $A$ is lower semi-bounded if and only if there exists a real number $\lambda$ such that $e^{-t(A+\lambda)}$, $t\ge 0$, is a continuous semi-group of contractions in $\H$, i.e. $\|e^{-t(A+\lambda)}u\|\le \|u\|$ (equivalently $\|e^{-tA}u-e^{-tA}v\|\le e^{\lambda t}\|u-v\|$). Thus in the nonlinear case we are led to look for nonlinear extensions which are generators of continuous nonlinear semigroups $S_t$, $t\ge 0$, such that $\|S_t(u)-S_t(v)\|\le e^{\lambda t}\|u-v\|$ for some real number $\lambda$.\par   
By the theory of one-parameter continuous nonlinear semi-groups of contractions of type $\lambda$ we know that $S_t$ has a generator given by a monotone operator of type $\lambda$ which is a principal section of a maximal monotone relation (see Section 2 for a compact review of the theory of maximal monotone operators). Since maximal monotonicity can be characterized in terms of nonlinear resolvents and since, in the linear case, the theory of self-adjoint extensions can be formulated in terms of the famed Kre\u\i n's resolvent formula, one is led to look for a nonlinear version of this formula. In Section 3 we show that such a nonlinear generalization can be found and that it gives rise to maximal monotone nonlinear extensions of the symmetric operator $S$ (see Theorem \ref{estensioni} and Remark \ref{char}). It turns out that these nonlinear extensions $A_{\Theta}$ are parametrized by maximal monotone relations $\Theta\subset\fh\times\fh$ in an auxiliary Hilbert space $\fh$ isomorphic to the defect space of $S$. Moreover the nonlinear semigroup $S^{\Theta}_t$ having $A_{\Theta}$ as its generator continuously depends on the linear symmetric operator $S$ and the extension parameter $\Theta$ (see Lemma \ref{conv}).
\par In the linear case to any positive extension one can associate a corresponding bilinear form defined in terms of a positive bilinear form in $\fh$; the quadratic form of the linear extension is a convex lower semicontinuous function and the associated self-adjoint operator is (one half of) its differential. This correspondence has a nonlinear analogue: if the extension parameter is the sub-differential of a convex lower semicontinuous function on $\fh$ then the corresponding extension is the the sub-differential of a convex lower semicontinuous function (see Theorem \ref{cyclic} and Remark \ref{Remark 4.7}). Such a representation in terms of sub-differentials allows for results about the regularity and the asymptotic behavior of the nonlinear semi-groups (see Remarks \ref{Remark 4.5}, \ref{Remark 4.6} and \ref{lr}). \par 
The paper is concluded by Section 5 which contains some applications. In the first one we give a nonlinear version of 
the self-adjoint extensions describing point perturbations of the 3-dimensional Laplacian (see the comprehensive book \cite{[AGHH]} and references therein for the linear case); such an example is then generalized by considering more general singular perturbations of the $n$-dimensional Laplacian supported on $d$-sets. Another application provides maximal monotone realizations of Laplace operators on a bounded regular set with nonlinear boundary conditions; conditions under which such operators generate nonlinear Markovian semigroups are also given.

\section{Preliminaries: Nonlinear Semigroups of Evolution and their Generators}
In this section we briefly recall the basic facts about monotone operators and nonlinear contraction semigroups; we refer to \cite{[Br2]}, \cite{[Paz]}, \cite{[Barb]}, \cite{[Sho]} and references therein for more details and proofs.\par 
Let  $A:\D(A)\subseteq\H\to\H$ be a nonlinear operator on the {\it real} 
Hilbert space $\H$ with scalar product $\langle\cdot  ,\cdot  \rangle$ and
corresponding norm $\|\cdot  \|$. $A$ is said to be {\it monotone  of type $\omega$} ({\it monotone} in case $\omega=0$) if
$$
\forall\,u,v\in \D(A)\,,\quad\langle
(A+\omega)(u)-(A+\omega)(v),u-v\rangle\ge 0 
$$ 
and {\it maximal monotone of type $\omega$} ({\it maximal monotone} in case $\omega=0$) if for some $\lambda>\omega$ (equivalently for any $\lambda >\omega$) one has 
$$
\text{\rm range}\,(A+\lambda)=\H\,.
$$
By such a definition one gets the existence of the nonlinear resolvent: if $A$ is monotone of type $\omega$ then for $\lambda>\omega$
$$
\langle(A+\lambda )(u)-(A+\lambda)(v), u-v\rangle\ge (\lambda-\omega)\|u-v\|^{2}\,.$$ 
Thus if $A$ is maximal monotone of type $\omega$ then
$$
(A+\lambda ):\D(A)\subseteq \H\to\H
$$ 
is bijective for any $\lambda>\omega$ 
and the nonlinear resolvent 
$$
(A+\lambda)^{-1}:\H\to\H\,,\quad \lambda>\omega\,,
$$
is monotone and is a Lipschitz map with Lipschitz constant $(\lambda-\omega)^{-1}$.
\par
Given the nonlinear resolvent $R_{\lambda}:=(A+\lambda)^{-1}$, obviously one has  
$A=R_{\lambda}^{-1}-\lambda$ for any $\lambda>\omega$,
and such a relation is equivalent to the nonlinear resolvent identity 
\begin{equation}\label{nlr}
R_{\lambda}=R_{\mu}\circ(1-(\lambda-\mu)R_{\lambda})\,,
\end{equation}
which holds for any couple $\lambda,\mu\in(\omega,\infty)$. 
Conversely if $R_{\lambda}:\H\to\H$, $\lambda> \omega$, is a family of 
monotone and injective nonlinear maps which satisfies the nonlinear resolvent identity \eqref{nlr},  
then $$A:=(R_{\lambda}^{-1}-\lambda):
\D(A)\subseteq\H\to\H\,,\quad \D(A):=\text{\rm range}(R_{\lambda})\,,$$
is a $\lambda$-independent, maximal monotone nonlinear operator of type $\omega$.
The notion of maximal monotone operator can be generalized by considering multi-valued maps:\par
$\A\subset \H\times \H$ is said to be a {\it monotone relation of type $\omega$} ({\it monotone relation} in case $\omega=0$) if 
$$
\forall\, (u,\t u), (v,\t v)\in \A\,,\quad\langle \t u-\t v,u-v\rangle\ge -\omega  \|u-v\|^{2} 
$$
and is said to be a {\it maximal monotone relation of type $\omega$} ({\it maximal monotone relation} in case $\omega=0$) if it is not properly contained in any other 
monotone relation of type $\omega$. By Minty's theorem (see e.g. \cite[Lecture 3, Theorem 1]{[Paz]}), the graph $$\text{\rm graph}(A):=\{(u,\t u)\in\H\times\H: u\in\D(A),
\ \t u=A(u)\}$$ of a maximal monotone operator of type $\omega$ is a maximal monotone relation of type $\omega$. Conversely, since any $\A\subset\H\times\H$  defines a set-valued  
operator by 
$$ u\mapsto\A(u):=\{\t u\in\H :(u,\t u)\in \A\}$$ 
with domain 
$$\D(\A):=\{u\in\H: \A(u) \not=\emptyset\}$$
and  $\A(u)$ is closed and convex for any maximal monotone relation $\A$ (see e.g. the Lemma in \cite{[Paz]}, Lecture 3), one can associate to a maximal monotone relation $\A\subset\H\times\H$ of type $\omega$ a single-valued nonlinear operator $\A^{0}:\D(\A)\subseteq\H\to\H$ 
by  $\A^{0}(u):=u_{\text{min}}$, where 
$u_{\text{min}}$ is the element of minimum norm in the closed convex set 
$\A(u)$ (see Corollary 2 in \cite{[Paz]}, Lecture 3). By \cite[Corollaire 2.2]{[Br2]}, $\A^{0}_{1}=\A^{0}_{2}\Rightarrow\A_{1}=\A_{2}$,   for any couple of maximal monotone relations. 
\vskip8pt\noindent
In the following we identify a single-valued operator $B$ with its graph; hence, given a relation $\A$,  the writing $\A+B$ means the relation $\A+\text{graph}(B)$.
\vskip8pt\noindent
While the domain of a linear maximal monotone relation is
necessarily dense, in the nonlinear case this can be false; by
Minty-Rockafellar theorem  (see e.g. \cite[Th\'eor\`eme 2.2]{[Br2]}) the closure of the domain $\D(\A)$ of a
maximal monotone relation $\A\subset\H\times\H$ is always a convex set. 
\par 
Given the closed convex nonempty subset ${\mathscr C}\subseteq \H$,  the family of maps $S_t:{\mathscr C} \to{\mathscr C} $, 
$t\ge 0$, is
said to be a one-parameter nonlinear continuous semi-group 
of type $\omega$ (of contractions, in case $\omega=0$) on ${\mathscr C} $  if one has 
$$
S_{0}(u)=u\,,\quad S_{t_1}\!\circ S_{t_2}=S_{t_1+t_2}\,,
\qquad\lim_{t\downarrow 0}\,\|S_t(u)-u\|=0
$$
and
$$
\|S_t(u)-S_t(v)\|\le e^{\omega t}
\|u-v\|\,.
$$
One then defines the generator of the above semigroup by 
$$
A:\D(A)\subseteq \H\to\H\,,\qquad A(u):=\lim_{t\downarrow 0}\, \frac{1}{t}\,(u-S_t(u))\,,
$$ 
where $\D(A)\subseteq {\mathscr C}$ is the set of $u$ such that the above limit exists (by the last Remark in \cite[Lecture 5]{[Paz]} the limits above can be equivalently taken either in strong or in weak sense). The main properties of the semigroup $S_{t}$ and of its generator $A$ are the following ones (see e.g. \cite[Lectures 5 and 6]{[Paz]}):
1) $\D(A)$ is dense in ${\mathscr C} $ and $S_{t}$-invariant;
2) $A$ is monotone of type $\omega$ and there exists a unique maximal monotone relation $\A\subset\H\times\H$ of type $\omega$ such that $A=\A^{0}$; 
3) the path $t\mapsto u(t):=S_t(u)$ is Lipschitz continuous for any $u\in \D(A)$; 
4) $t\mapsto A(u(t))$ is right continuous and $t\mapsto e^{-\omega t}\|A(u(t))\|$ is monotone non-increasing for any $u\in \D(A)$; 5) for any $u\in \D(A)$ and $t>0$, one has  
$$
\frac{d^{+}}{dt}\,u(t)+A(u(t))=0\,, 
$$
where $\frac{d^{+}}{dt}$ denotes the right derivative; 6) for a.e. $t>0$, one has  
$$
\frac{d\,}{dt}\,u(t)+A(u(t))=0\,.
$$
In the linear case ${\mathscr C} =\H$,  
$t\mapsto S_tu$ is continuously differentiable everywhere and by functional calculus a self-adjoint 
operator generates a one-parameter linear continuous semi-group 
of type $\omega$ if and only if $A\ge -\omega$ and $S_t=e^{-tA}$. The nonlinear analogue
of that is given by combining the properties listed above with K\o mura's theorem (see \cite{[Kom]} and  \cite{[Kat]}):  
{\it a maximal monotone operator $A:\D(A)\subseteq \H\to\H$ of type $\omega$  generates a one-parameter nonlinear continuous semi-group 
of type $\omega$ on $\overline{\D(A)}$}.
\section{Nonlinear maximal monotone extensions}
Let $S:\D(S)\subseteq\H\to\H$, $S\ge -\omega$, be a densely defined, semi-bounded symmetric operator. Then $S$ is linear monotone of type $\omega$ but is not maximal monotone since it has 
$A_\circ $ as proper monotone extension, where $A_\circ :\D(A_\circ )\subseteq\H\to\H$, is the linear self-adjoint
operator given by the Friedrichs extension of $S$.\par 
We denote by $\H_{{\circ}}$ the Hilbert space $\D(A_\circ )$ with the
scalar product $\langle\cdot  ,\cdot  \rangle_{{\circ}}$ leading to the graph
norm, i.e.
$$
\langle u,v\rangle_{{\circ}}:=
\langle A_\circ u,A_\circ v\rangle+\langle u,v\rangle\,.
$$
From now on we suppose that $S$ is not essentially self-adjoint; without loss of generality we can take $\bar S=A_{\circ}|\N$, where $\N=\text{\rm 
kernel}\,(\tau)$ is the kernel (which we suppose to be dense in $\H$) of a linear, bounded surjective map
$$
\tau:\H_{{\circ}}\to\fh\,,
$$
onto an auxiliary Hilbert space $\fh$ (with scalar product $[\cdot  ,\cdot  ]$
and corresponding norm $|\cdot  |$) isomorphic to the defect space of $S$ (see e.g. \cite[Section 2.2]{[P12]}).
\par
Our aim here is to construct  {\it nonlinear} maximal monotone operators  $A$ such that 
$$
S\subset A\subset S^{*}\,.
$$ 
Being $\N\subseteq \D(A_\circ )\cap \D(A)$ dense, the
operator $A$ is a {\it nonlinear singular
perturbation of $A_{\circ}$}.\vskip8pt
For any $\lambda>\omega$ we define the bounded linear operators
$$R^{\circ}_\lambda :\H\to\H_\circ\,,
\qquad R^{\circ}_\lambda :=(A_\circ +\lambda)^{-1}
$$
and  
$$G_\lambda :\fh\to\in\H\,,\qquad G_\lambda := (\tau R^{\circ}_\lambda )^*\,.
$$
By the denseness hypothesis on $\N$ one has 
\begin{equation}\label{cap}
\text{\rm range$(G_\lambda )$}\cap
\D(A_\circ )=\left\{0\right\}
\end{equation}
and, by first resolvent identity,
\begin{equation}\label{1st}
(\lambda-\mu)\,R^{\circ}_\mu  G_\lambda =G_\mu -G_\lambda \,,
\end{equation}
i.e.
\begin{equation}\label{2st}
A_{\circ}(G_{\mu}-G_\lambda) =\lambda G_\lambda -\mu G_\mu\,.
\end{equation}
Now, we try  to define a nonlinear extension $A$ by producing 
its nonlinear resolvent $R_\lambda :=(A+\lambda)^{-1}$. Let us write such a presumed resolvent as 
$$
R_\lambda =(1+ \tilde V_\lambda \circ\tau)\circ R^{\circ}_\lambda 
\equiv R^{\circ}_\lambda + \tilde V_\lambda \circ G_\lambda ^{*}\,, 
$$
where the nonlinear map $\tilde V_\lambda :\fh\to \H$ has to be determined. Then, since 
$$
\langle R^{\circ}_\lambda u,u\rangle \ge (\lambda-\omega)\|R^{\circ}_\lambda u\|^{2}\ge 0\,,
$$ 
one has 
\begin{align*}
\langle R_\lambda  (u)-R_\lambda  (v),u-v\rangle
=&
\langle R^{\circ}_\lambda (u-v),u-v\rangle+
\langle \tilde V_\lambda ( G_\lambda ^{*}u)-
\tilde V_\lambda ( G_\lambda ^{*}v),u-v\rangle
\\
\ge 
&
\langle \tilde V_\lambda (G_\lambda ^{*}u)-
\tilde V_\lambda ( G_\lambda ^{*}v),u-v\rangle
\end{align*}
and so, setting $\tilde V_\lambda =G_\lambda V_\lambda$ for some nonlinear   
$V_\lambda:\fh\to\fh$, one gets
\begin{equation*}
\langle R_\lambda  (u)-R_\lambda  (v),u-v\rangle\ge
[V_\lambda (G_{\lambda}^{*}u)-
V_\lambda (G_{\lambda}^{*}v),G_{\lambda}^{*}u-G_{\lambda}^{*}v]
\,.
\end{equation*}
Thus $R_{\lambda}$ is monotone 
whenever 
$$
\forall\,\xi,\zeta\in\fh\,,\quad[V_\lambda(\xi)-
V_\lambda(\zeta),\xi-\zeta]\ge 0\,,
$$ 
namely whenever $V_\lambda$ is monotone.
\par
Suppose now that there 
exists a family of monotone relations $$M_{\lambda}\subset\fh\times\fh\,,\quad 
\lambda>\omega\,,
$$  
such that  
\begin{equation}\label{Gamma2}
\text{$M_{\lambda}-M_{\mu}\ $ is single-valued and}\quad M_{\lambda}-M_{\mu}=(\lambda-\mu)\, G_\mu ^{*}G_{\lambda}
\end{equation}
and
\begin{equation}\label{Gamma3}
Z\not=\emptyset\,,
\end{equation}
where $Z$ is the set of $\lambda>\omega$ such that $\{(\t\xi,\xi):(\xi,\t\xi)\in M_{\lambda}\}$ is the graph of a (necessarily monotone) single-valued map $M_\lambda^{-1}:\fh\to\fh$.
\par Then, setting $V_\lambda=M_{\lambda}^{-1}$, one has the following 
\begin{lemma}\label{res} For any $\lambda\in Z$ let us define 
$$
R_{\lambda}:\H\to\H\,,\quad 
R_{\lambda}=R^{\circ}_{\lambda}+
G_{\lambda}M_{\lambda}^{-1}\circ G_{\lambda}^{*}\,.
$$
Then $R_{\lambda}$ is monotone, injective and satisfies the nonlinear resolvent identity
\begin{equation}\label{nlres}
R_\lambda  =R_\mu \circ(1-(\lambda-\mu)\,R_\lambda  )\,.
\end{equation}
\end{lemma}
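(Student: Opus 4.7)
The plan is to dispatch monotonicity and injectivity first using the computation already carried out immediately before the statement, and then to verify the nonlinear resolvent identity by direct calculation, leaning on \eqref{1st}, \eqref{Gamma2}, and the ordinary first resolvent identity for $R^{\circ}_{\lambda}$. For monotonicity I would note that the preamble calculation is in fact an equality,
\begin{equation*}
\langle R_{\lambda}(u)-R_{\lambda}(v),u-v\rangle=\langle R^{\circ}_{\lambda}(u-v),u-v\rangle+[M_{\lambda}^{-1}(G_{\lambda}^{*}u)-M_{\lambda}^{-1}(G_{\lambda}^{*}v),G_{\lambda}^{*}u-G_{\lambda}^{*}v],
\end{equation*}
in which both summands are non-negative ($\langle R^{\circ}_{\lambda}w,w\rangle\ge(\lambda-\omega)\|R^{\circ}_{\lambda}w\|^{2}$, and $M_{\lambda}^{-1}$ is monotone whenever $M_{\lambda}$ is). Injectivity comes along for free: if $R_{\lambda}u=R_{\lambda}v$ both summands must vanish, so $R^{\circ}_{\lambda}(u-v)=0$ and hence $u=v$ by bijectivity of the linear resolvent on $\H$.

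For the resolvent identity I would fix $u\in\H$, set $\xi:=M_{\lambda}^{-1}G_{\lambda}^{*}u$ and $w:=R_{\lambda}u=R^{\circ}_{\lambda}u+G_{\lambda}\xi$, and aim to show $R_{\mu}v=w$ for $v:=u-(\lambda-\mu)w$. Expanding $R_{\mu}v=R^{\circ}_{\mu}v+G_{\mu}M_{\mu}^{-1}G_{\mu}^{*}v$, I compute the two factors separately. The classical first resolvent identity $R^{\circ}_{\mu}-R^{\circ}_{\lambda}=(\lambda-\mu)R^{\circ}_{\mu}R^{\circ}_{\lambda}$ combined with \eqref{1st} collapses the first factor to
\begin{equation*}
R^{\circ}_{\mu}v=R^{\circ}_{\lambda}u-G_{\mu}\xi+G_{\lambda}\xi.
\end{equation*}
Since $G_{\mu}^{*}=\tau R^{\circ}_{\mu}$, applying $\tau$ to the same manipulations yields
\begin{equation*}
G_{\mu}^{*}v=G_{\lambda}^{*}u-(\lambda-\mu)G_{\mu}^{*}G_{\lambda}\xi.
\end{equation*}
Here \eqref{Gamma2} is decisive: by construction $G_{\lambda}^{*}u\in M_{\lambda}\xi$, and the single-valued identity $M_{\lambda}-M_{\mu}=(\lambda-\mu)G_{\mu}^{*}G_{\lambda}$ forces $G_{\mu}^{*}v=G_{\lambda}^{*}u-(\lambda-\mu)G_{\mu}^{*}G_{\lambda}\xi\in M_{\mu}\xi$. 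Because $\mu\in Z$ makes $M_{\mu}^{-1}$ single-valued, this reads $\xi=M_{\mu}^{-1}G_{\mu}^{*}v$; substituting back,
\begin{equation*}
R_{\mu}v=R^{\circ}_{\mu}v+G_{\mu}\xi=R^{\circ}_{\lambda}u+G_{\lambda}\xi=w,
\end{equation*}
as wanted.

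The main obstacle I anticipate is the bookkeeping around the multivaluedness in the application of \eqref{Gamma2}: to conclude $G_{\mu}^{*}v\in M_{\mu}\xi$ one needs $\xi\in\D(M_{\mu})$, which must be extracted from the interpretation of \eqref{Gamma2}—equality as relations of $M_{\lambda}-M_{\mu}$ with the everywhere-defined linear operator $(\lambda-\mu)G_{\mu}^{*}G_{\lambda}$ forces $\D(M_{\lambda})\subseteq\D(M_{\mu})$ (and by symmetry the reverse), and then $M_{\mu}\xi$ is a singleton whose unique element is dictated by the difference identity. Once this domain compatibility is in place, the remaining steps are routine linear manipulations with the first resolvent identity and the intertwinings $G_{\lambda}^{*}=\tau R^{\circ}_{\lambda}$.
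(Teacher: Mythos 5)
Your argument is correct, and for both monotonicity and the resolvent identity it is essentially the paper's proof: the identity is established by the same algebra (the first resolvent identity, \eqref{1st}, and the key observation that $G_{\lambda}^{*}u\in M_{\lambda}(\xi)$ together with \eqref{Gamma2} give $G_{\mu}^{*}v\in M_{\mu}(\xi)$, hence $M_{\mu}^{-1}(G_{\mu}^{*}v)=\xi$ by single-valuedness for $\mu\in Z$), only written pointwise rather than as a composition of maps, and your worry about the multivalued bookkeeping in \eqref{Gamma2} is resolved exactly as the paper's computation implicitly does, by reading \eqref{Gamma2} as $M_{\lambda}=M_{\mu}+(\lambda-\mu)G_{\mu}^{*}G_{\lambda}$ with an everywhere-defined single-valued perturbation. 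The one genuinely different ingredient is injectivity: the paper notes that $R_{\lambda}u=R_{\lambda}v$ forces $R^{\circ}_{\lambda}(u-v)=-G_{\lambda}\bigl(M_{\lambda}^{-1}(G_{\lambda}^{*}u)-M_{\lambda}^{-1}(G_{\lambda}^{*}v)\bigr)$ and invokes the transversality condition \eqref{cap}, whereas you observe that $\langle R_{\lambda}(u)-R_{\lambda}(v),u-v\rangle$ is a sum of two non-negative terms whose vanishing forces $R^{\circ}_{\lambda}(u-v)=0$ via $\langle R^{\circ}_{\lambda}w,w\rangle\ge(\lambda-\omega)\|R^{\circ}_{\lambda}w\|^{2}$ and the injectivity of $R^{\circ}_{\lambda}$. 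Your variant needs only $\lambda>\omega$ and the monotonicity of $M_{\lambda}^{-1}$, so it does not use the denseness of $\N$ at all; the paper's version is a one-line appeal to a standing hypothesis. Both are sound, and your route has the mild advantage of showing that injectivity of $R_{\lambda}$ is automatic from strict monotonicity of the linear resolvent, independently of \eqref{cap}.
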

\begin{proof} $R_{\lambda}$ is monotone by monotonicity of $M_{\lambda}^{-1}$.\par
Let us now take $u$, $v$ in $\H$ such that $R_{\lambda}u=R_{\lambda}v$. Then
$$
R^{\circ}_{\lambda}(u-v)=-G_{\lambda}(M_{\lambda}^{-1}(G^{*}_{\lambda}u)-M_{\lambda}^{-1}(G^{*}_{\lambda}v))\,.
$$
By \eqref{cap} one gets $u=v$ and therefore $R_{\lambda}$ is injective.\par
By \eqref{1st} and \eqref{Gamma2} one has
\begin{align*}
&R_\mu \circ  (1-(\lambda-\mu)\,R_\lambda  )\\
=&R^{\circ}_\mu   (1-(\lambda-\mu)\,R^{\circ}_\lambda )
-(\lambda-\mu)\,R^{\circ}_\mu  
G_\lambda M_\lambda^{-1}\circ G_\lambda ^{*}
+G_\mu   M_\mu^{-1}\circ  G_\mu ^{*}\\
-&G_\mu   M_\mu^{-1}\circ  (\lambda-\mu)(G_\mu ^{*}  R^{\circ}_\lambda 
+
G_\mu ^{*}G_\lambda   M_\lambda^{-1}\circ   G_\lambda ^{*})\\ 
=&R^{\circ}_\lambda -(G_\mu -G_\lambda )M_\lambda^{-1}\circ  
G_\lambda ^{*}
+G_\mu   M_\mu^{-1}\circ  (G_\lambda ^{*}
-(M_\lambda-M_\mu)  M_\lambda^{-1}\circ  
G_\lambda ^{*})
\\
=
&
R^{\circ}_\lambda +G_\lambda M_\lambda^{-1}\circ  
G_\lambda ^{*}=R_\lambda  \,.
\end{align*}
\end{proof}
By Lemma \ref{res} one immediately gets the following 
\begin{corollary} Let $R_{\lambda}$ be as in Lemma \ref{res} and pose
$$
A:\D(A)\subseteq\H\to\H\,,\quad 
\D(A):=\text{\rm range}(R_{\lambda})\quad A:=R_{\lambda}^{-1}-\lambda\,.
$$
Then $A$ is $\lambda$-independent and maximal monotone of type $\lambda$ 
for any $\lambda\in Z$.
\end{corollary}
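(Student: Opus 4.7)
My plan is to read this as a direct consequence of Lemma \ref{res} together with the converse characterization recalled at the end of Section 2: a family of monotone injective maps $R_\lambda:\H\to\H$ satisfying the nonlinear resolvent identity \eqref{nlres} is automatically the resolvent family of a $\lambda$-independent maximal monotone operator $A$, explicitly recovered as $A=R_\lambda^{-1}-\lambda$. Lemma \ref{res} has already supplied the three ingredients (monotonicity, injectivity, and \eqref{nlres}) for every $\lambda\in Z$, so only bookkeeping remains.

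For the $\lambda$-independence I would argue directly from \eqref{nlres}. Fix $\lambda,\mu\in Z$ and $u=R_\lambda x\in\text{range}(R_\lambda)$; \eqref{nlres} yields $u=R_\mu(x-(\lambda-\mu)u)$, so $u\in\text{range}(R_\mu)$ and $R_\mu^{-1}(u)=x-(\lambda-\mu)u$. Subtracting $\mu u$ on both sides one finds $R_\mu^{-1}(u)-\mu u = x-\lambda u = R_\lambda^{-1}(u)-\lambda u$, so both $\D(A)$ and the action of $A$ are intrinsic to the family, not to the particular choice of parameter. The monotonicity of $A+\lambda$ follows by dualising monotonicity of $R_\lambda$: for $u=R_\lambda x$ and $v=R_\lambda y$,
$$
\langle u-v,(A+\lambda)u-(A+\lambda)v\rangle = \langle R_\lambda x-R_\lambda y, x-y\rangle\ge 0,
$$
which is exactly monotonicity of type $\lambda$. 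The surjectivity part of the maximality, $\text{range}(A+\lambda)=\text{range}(R_\lambda^{-1})=\H$, is immediate since $R_\lambda$ is defined everywhere on $\H$.

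The main delicacy, and the only real obstacle I anticipate, is reconciling the notion of ``maximal monotone of type $\lambda$'' from Section 2---which demands $\text{range}(A+\nu)=\H$ for some $\nu>\lambda$---with the fact that we so far have surjectivity only at the single value $\nu=\lambda$ and only for $\lambda\in Z$. Here \eqref{nlres} again does the heavy lifting: it can be used to solve by contraction the fixed-point equation $x=y+(\lambda-\nu)R_\lambda(x)$ that defines $R_\nu(y)$, for $\nu>\lambda$ close enough to $\lambda$. This propagates the resolvent family from the abstract set $Z$ to a half-interval above $\lambda$, thereby supplying the missing surjectivity of $A+\nu$ and placing the situation squarely within the converse characterization recalled in Section 2, which then identifies $A$ as maximal monotone of type $\lambda$.
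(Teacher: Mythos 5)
Your proof is essentially the paper's: the corollary is stated there with no proof beyond ``By Lemma \ref{res} one immediately gets'', i.e.\ an appeal to the converse characterization recalled in Section 2, and your first two paragraphs correctly supply the bookkeeping behind that appeal ($\lambda$-independence from \eqref{nlres} plus injectivity, and monotonicity of type $\lambda$ by transposing monotonicity of $R_\lambda$).

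One caveat on your last paragraph: the fixed-point equation $x=y+(\lambda-\nu)R_\lambda(x)$ is solvable by contraction only if $R_\lambda$ is Lipschitz, and monotonicity of $R_\lambda$ alone (which is all Lemma \ref{res} gives) does not provide a Lipschitz constant; one gets $\mathrm{Lip}(R_\lambda)\le(\lambda-\mu)^{-1}$ only after knowing $A$ is monotone of type $\mu$ for some $\mu<\lambda$, i.e.\ only for $\lambda$ not minimal in $Z$. The intended (and simpler) way to obtain $\mathrm{range}(A+\nu)=\H$ for some $\nu>\lambda$ is just to pick another $\nu\in Z$ with $\nu>\lambda$, which is always possible in the situations where the corollary is used, since Lemma \ref{lemmagamma} guarantees $(\lambda_\circ,+\infty)\subseteq Z$. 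For a literally one-point $Z$ your contraction step would not close the argument, but that degenerate case never occurs in the paper.
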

As regards the required properties of the family $M_\lambda$,  one has the following
\begin{lemma}\label{lemmagamma} Let $\Theta\subset\fh\times\fh$ be a maximal monotone relation and let $\lambda_\circ >\omega$. Then
\begin{equation}\label{gammateta}
M_\lambda^{\Theta}:=\Theta+
(\lambda-\lambda_\circ )G_{\circ }^{*}G_\lambda \,,\quad \lambda>\omega\,,\quad G_{\circ}:=G_{\lambda_\circ }\,,
\end{equation}
is a maximal monotone relation for any $\lambda\ge \lambda_{\circ}$ and satisfies \eqref{Gamma2} and \eqref{Gamma3} with $(\lambda_\circ ,+\infty)\subseteq Z$; $\lambda_{\circ}\in Z$ whenever $\Theta^{-1}$ is single-valued.
\end{lemma}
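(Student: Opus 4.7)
The plan is to verify the four assertions in sequence: monotonicity of $M_\lambda^\Theta$, its maximality, the identity \eqref{Gamma2}, and single-valuedness of $M_\lambda^{-1}$.

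The key auxiliary object is the bounded linear operator $B_\lambda := (\lambda - \lambda_\circ)\,G_\circ^* G_\lambda$ on $\fh$. First I would show $B_\lambda$ is a monotone linear operator for $\lambda \ge \lambda_\circ$: applying \eqref{1st} with $\mu = \lambda_\circ$ gives $G_\circ = G_\lambda + (\lambda - \lambda_\circ)\,R^\circ_{\lambda_\circ} G_\lambda$, so for every $\xi \in \fh$,
\begin{equation*}
[B_\lambda \xi, \xi] = (\lambda - \lambda_\circ)\langle G_\lambda \xi, G_\circ \xi\rangle = (\lambda - \lambda_\circ)\|G_\lambda \xi\|^2 + (\lambda - \lambda_\circ)^2\langle G_\lambda \xi, R^\circ_{\lambda_\circ} G_\lambda \xi\rangle,
\end{equation*}
which is $\ge (\lambda - \lambda_\circ)\|G_\lambda \xi\|^2 \ge 0$, since $R^\circ_{\lambda_\circ}\ge 0$ (as $A_\circ + \lambda_\circ \ge \lambda_\circ - \omega > 0$). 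Since $B_\lambda$ is bounded, everywhere-defined and monotone, the classical perturbation theorem for sums of maximal monotone operators (see \cite{[Br2]}) ensures that $M_\lambda^\Theta = \Theta + B_\lambda$ is itself maximal monotone.

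Next I would verify the operator identity \eqref{Gamma2}. The expression
\begin{equation*}
M_\lambda^\Theta - M_\mu^\Theta = (\lambda - \lambda_\circ)\,G_\circ^* G_\lambda - (\mu - \lambda_\circ)\,G_\circ^* G_\mu
\end{equation*}
is automatically single-valued as a linear operator on $\fh$. The crux is the symmetry $G_\circ^* G_\mu = G_\mu^* G_\circ$, which I would establish via the swapped form of \eqref{1st}, namely $R^\circ_\mu G_\lambda = R^\circ_\lambda G_\mu$ (obtained by interchanging $\lambda$ and $\mu$); specializing $\lambda = \lambda_\circ$ yields $G_\mu = G_\circ - (\mu - \lambda_\circ)\,R^\circ_\mu G_\circ$, whence $G_\circ^* G_\mu = G_\circ^* G_\circ - (\mu - \lambda_\circ)\,G_\circ^* R^\circ_\mu G_\circ$, and the right-hand side is manifestly self-adjoint (as $R^\circ_\mu$ is), so it equals its own adjoint $G_\mu^* G_\circ$. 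Using this symmetry along with $G_\mu^* = G_\circ^* - (\mu - \lambda_\circ)\,G_\mu^* R^\circ_{\lambda_\circ}$ and $R^\circ_{\lambda_\circ} G_\lambda = (\lambda - \lambda_\circ)^{-1}(G_\circ - G_\lambda)$ in $G_\mu^* G_\lambda$, a direct rearrangement yields
\begin{equation*}
(\lambda - \mu)\,G_\mu^* G_\lambda = (\lambda - \lambda_\circ)\,G_\circ^* G_\lambda - (\mu - \lambda_\circ)\,G_\mu^* G_\circ = (\lambda - \lambda_\circ)\,G_\circ^* G_\lambda - (\mu - \lambda_\circ)\,G_\circ^* G_\mu,
\end{equation*}
which is exactly \eqref{Gamma2}.

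Finally, for \eqref{Gamma3}: if $(\xi,\t\xi), (\xi',\t\xi) \in M_\lambda^\Theta$, then there exist $\eta \in \Theta(\xi)$, $\eta' \in \Theta(\xi')$ with $\eta - \eta' = -(\lambda - \lambda_\circ)\,G_\circ^* G_\lambda(\xi - \xi')$. Pairing with $\xi - \xi'$, the monotonicity of $\Theta$ combined with the lower bound $\langle G_\lambda \psi, G_\circ \psi\rangle \ge \|G_\lambda \psi\|^2$ from the first step forces $(\lambda - \lambda_\circ)\|G_\lambda(\xi - \xi')\|^2 \le 0$. For $\lambda > \lambda_\circ$ this gives $G_\lambda(\xi - \xi') = 0$, and injectivity of $G_\lambda = (\tau R^\circ_\lambda)^*$---a consequence of the surjectivity of $\tau R^\circ_\lambda:\H \to \fh$ (since $R^\circ_\lambda$ is a bijection $\H \to \H_\circ$ and $\tau$ is surjective onto $\fh$)---yields $\xi = \xi'$, so $(\lambda_\circ, +\infty) \subseteq Z$. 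For $\lambda = \lambda_\circ$ the perturbation $B_{\lambda_\circ}$ vanishes, so $M_{\lambda_\circ}^\Theta = \Theta$ and $\lambda_\circ \in Z$ precisely when $\Theta^{-1}$ is single-valued. The main obstacle is the bookkeeping behind the symmetry $G_\circ^* G_\mu = G_\mu^* G_\circ$ in the derivation of \eqref{Gamma2}; once $B_\lambda$ is identified as a bounded, everywhere-defined, monotone linear operator, the other pieces follow from standard monotone-operator perturbation theory and elementary computations.
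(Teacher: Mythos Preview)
Your argument for \eqref{Gamma3} has a genuine gap. Membership $\lambda\in Z$ requires that $\{(\t\xi,\xi):(\xi,\t\xi)\in M^{\Theta}_{\lambda}\}$ be the graph of a single-valued map $M_{\lambda}^{-1}:\fh\to\fh$, i.e.\ that the inverse be both single-valued \emph{and defined on all of $\fh$}. You establish only the first half: from $(\xi,\t\xi),(\xi',\t\xi)\in M_{\lambda}^{\Theta}$ you deduce $\xi=\xi'$. But maximal monotonicity of $M_{\lambda}^{\Theta}$ together with injectivity does not force surjectivity (think of $\arctan$ on $\RE$). What is missing is a coercive lower bound $[B_{\lambda}\xi,\xi]\ge c_{\lambda}|\xi|^{2}$ with $c_{\lambda}>0$ for $\lambda>\lambda_{\circ}$. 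The paper obtains this by invoking the closed range theorem: since $G_{\circ}^{*}=\tau R^{\circ}_{\lambda_{\circ}}$ is surjective, $G_{\circ}$ has closed range, so $\|G_{\circ}\xi\|\ge\gamma_{0}|\xi|$ for some $\gamma_{0}>0$; combined with the estimate $[B_{\lambda}\xi,\xi]\ge(\lambda-\lambda_{\circ})\frac{\lambda_{\circ}-\omega}{\lambda-\omega}\|G_{\circ}\xi\|^{2}$ this gives strong monotonicity of $B_{\lambda}$. Then $M_{\lambda}^{\Theta}=\Theta+B_{\lambda}$ is maximal monotone of type $-\omega_{0}$ with $\omega_{0}>0$, hence range$(M_{\lambda}^{\Theta})=\fh$. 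Your own bound $[B_{\lambda}\xi,\xi]\ge(\lambda-\lambda_{\circ})\|G_{\lambda}\xi\|^{2}$ could be upgraded in the same way (you already observe that $G_{\lambda}^{*}$ is surjective), but mere injectivity of $G_{\lambda}$ is not enough.

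A minor point on \eqref{Gamma2}: your derivation is correct but unnecessarily laborious. The paper exploits the alternative form $B_{\lambda}=\tau(G_{\circ}-G_{\lambda})$, which makes the identity immediate:
\[
B_{\lambda}-B_{\mu}=\tau(G_{\mu}-G_{\lambda})=(\lambda-\mu)\,\tau R^{\circ}_{\mu}G_{\lambda}=(\lambda-\mu)\,G_{\mu}^{*}G_{\lambda},
\]
with no need for the symmetry $G_{\circ}^{*}G_{\mu}=G_{\mu}^{*}G_{\circ}$.
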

\begin{proof} By \eqref{1st}, the family 
$M^\circ _\lambda$, $\lambda>\omega$, of bounded symmetric operators 
$$
M^\circ _\lambda:=\tau(G_{\circ }-G_\lambda )\equiv
(\lambda-\lambda_\circ )\,G_{\circ }^{*}  G_\lambda 
$$
satisfies \eqref{Gamma2}. Thus the relation $M_\lambda^{\Theta}$ satisfies \eqref{Gamma2}. Moreover, by \eqref{1st} again,
\begin{align*}
[M^\circ _{\lambda}\xi,\xi]=&(\lambda-\lambda_\circ )\langle 
G_\lambda \xi,G_{\circ }\xi\rangle
=(\lambda-\lambda_\circ )(\|G_{\circ } \xi\|^{2}-(\lambda-\lambda_\circ ) 
\langle R^{\circ}_{\lambda}G_{\circ } \xi,G_{\circ } \xi\rangle)\\
&\ge(\lambda-\lambda_\circ )\,\frac{\lambda_\circ -\omega}{\lambda-\omega}\,\|G_\circ  \xi\|^{2}\,.
\end{align*}
Since $G_{\circ }^{*}$ is surjective, $G_{\circ }$ has closed range by the closed range theorem and so there exists $\gamma_{0}>0$ such that  
$\|G_{\circ } \xi\|\ge \gamma_{0}\|\xi\|$. Thus $M^{\circ}_{\lambda}$ is monotone of type $-\omega_{0}$ with $\omega_{0}=\gamma_{0}^{2}(\lambda-\lambda_\circ )\,\frac{\lambda_\circ -\omega}{\lambda-\omega}>0$. Moreover, since $\Theta$ is monotone, for any $(\xi,\t\xi)$, $(\zeta,\t\zeta)$ in $\Theta$ one has
\begin{align*}
|(\t\xi+M^\circ _{\lambda}\xi)-(\t\zeta+M^\circ _{\lambda}\zeta)|\,
\ge \omega_{0}|\xi-\zeta|\,,
\end{align*}
and so $(M^\Theta _\lambda)^{-1}(\t\xi):=\{\xi:(\xi,\t\xi)\in M^\Theta _\lambda\}$ is a single-valued map. Since 
$M^\circ _\lambda$ is linear, monotone and bounded, and $\Theta$ is maximal monotone, $M^\Theta _\lambda$ is maximal monotone of type $-\omega_{0}$ by \cite[Lemme 2.4]{[Br2]}. Hence $\D((M^\Theta _\lambda)^{-1})=\fh$ and so $(\lambda_\circ ,+\infty)\subseteq Z$. \par 
Since $\Theta^{-1}$ is maximal monotone (see e.g. \cite[Proposition 2.1]{[Barb]}) 
and $M_{\lambda_{\circ}}^{\circ}=\Theta$, $\lambda_{\circ}\in Z$ whenever $\Theta^{-1}$ is single-valued.
\end{proof}
By collecting the above results finally one gets the following nonlinear version of Kre\u\i n's resolvent formula (see \cite{[P01]}, \cite{[DeMa]} and references therein for the linear case):
\begin{theorem}\label{estensioni}
Given the semi-bounded symmetric operator $S:\D(S)\subseteq\H\to\H$, $S\ge -\,\omega$, the surjective and continuous linear map $\tau:\H_{\circ}\to\fh$, such that $\D(S)=\text{\rm kernel($\tau$)}$, and the maximal monotone relation $\Theta\subset\fh\times\fh$, let   
$\lambda_\circ >\omega$ and define the maximal monotone relation 
$M_{\lambda}^\Theta\subset\fh\times\fh$ as in \eqref{gammateta}.  Then
$$
R^\Theta_\lambda:=R^{\circ}_\lambda +
G_\lambda (M_{\lambda}^\Theta)^{-1}\circ  
G_\lambda^{*} \,,
\qquad \lambda> \lambda_\circ 
$$
is the resolvent of a nonlinear maximal monotone operator $A_\Theta:\D(A_{\Theta})\subseteq\H\to\H$ of type $\lambda_\circ $. Such an operator is
defined by
$$
\D(A_\Theta):=\{u\in\H:u=
u_\circ+G_\circ\xi_{u}\,,\ u_\circ\in \D(A_{\circ})\,,\ (\xi_{u},\tau u_\circ)\in \Theta\}\,,
$$
$$
A_\Theta (u):=A_{\circ}u_\circ-\lambda_{\circ}G_{\circ}\xi_{u}\,.
$$
\end{theorem}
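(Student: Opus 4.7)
The plan has two stages: first establish the abstract maximal monotonicity of $A_\Theta$ of type $\lambda_\circ$ by chaining the preparatory results, then unravel the formula $R^\Theta_\lambda = R^\circ_\lambda + G_\lambda(M^\Theta_\lambda)^{-1}\circ G^*_\lambda$ to extract the explicit domain and action. For the first stage, I would note that Lemma \ref{lemmagamma} applied to $(\Theta,\lambda_\circ)$ produces the family $M^\Theta_\lambda$ satisfying \eqref{Gamma2} and \eqref{Gamma3} with $(\lambda_\circ,+\infty)\subseteq Z$, so that Lemma \ref{res} guarantees $R^\Theta_\lambda$ is monotone, injective and obeys the nonlinear resolvent identity for every $\lambda>\lambda_\circ$; the Corollary then yields a $\lambda$-independent operator $A_\Theta=(R^\Theta_\lambda)^{-1}-\lambda$, maximal monotone of type $\lambda$ for each $\lambda>\lambda_\circ$. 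Passing to the infimum $\lambda\downarrow\lambda_\circ$ in the monotonicity inequality gives the sharper type $\lambda_\circ$, since $\mathrm{range}(A_\Theta+\lambda)=\H$ already holds for some (hence any) $\lambda>\lambda_\circ$.

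For the domain formula I would pick $u\in\D(A_\Theta)=\mathrm{range}(R^\Theta_\lambda)$ and write $u=R^\Theta_\lambda f$ for some $f\in\H$ and a fixed $\lambda>\lambda_\circ$. Setting $\xi_u:=(M^\Theta_\lambda)^{-1}(G^*_\lambda f)$, the definition of $M^\Theta_\lambda$ forces
\[
G^*_\lambda f-(\lambda-\lambda_\circ)G^*_\circ G_\lambda \xi_u\in\Theta(\xi_u).
\]
Now \eqref{1st} supplies the key rewriting $G_\lambda=G_\circ-(\lambda-\lambda_\circ)R^\circ_\lambda G_\circ$, so
\[
u=R^\circ_\lambda f+G_\lambda \xi_u=G_\circ \xi_u+R^\circ_\lambda\bigl(f-(\lambda-\lambda_\circ)G_\circ \xi_u\bigr).
\]
Defining $u_\circ:=R^\circ_\lambda(f-(\lambda-\lambda_\circ)G_\circ \xi_u)\in\D(A_\circ)$ and invoking the symmetry $G^*_\lambda G_\circ=G^*_\circ G_\lambda$ built into the definition of $M^\circ_\lambda$ in Lemma \ref{lemmagamma}, I read off $\tau u_\circ=G^*_\lambda f-(\lambda-\lambda_\circ)G^*_\circ G_\lambda \xi_u\in\Theta(\xi_u)$, so $(\xi_u,\tau u_\circ)\in\Theta$. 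A direct calculation $A_\Theta(u)=f-\lambda u=(A_\circ+\lambda)u_\circ+(\lambda-\lambda_\circ)G_\circ \xi_u-\lambda(u_\circ+G_\circ \xi_u)=A_\circ u_\circ-\lambda_\circ G_\circ \xi_u$ then furnishes the formula for $A_\Theta$.

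The converse inclusion---that any $u=u_\circ+G_\circ\xi$ with $(\xi,\tau u_\circ)\in\Theta$ lies in $\D(A_\Theta)$---proceeds by reversing these steps: setting $f:=(A_\circ+\lambda)u_\circ+(\lambda-\lambda_\circ)G_\circ\xi$ and verifying $R^\Theta_\lambda f=u$ using the same identities. Uniqueness of the decomposition $u=u_\circ+G_\circ\xi_u$, which is needed to make the action formula well defined, comes from \eqref{cap} together with injectivity of $G_\circ$, and the latter follows from surjectivity of $G^*_\circ=\tau R^\circ_{\lambda_\circ}$. The main (mild) obstacle I anticipate is keeping the $\lambda$-dependent and $\lambda_\circ$-indexed $G$-operators aligned: the resolvent formula uses $G_\lambda$ while the stated domain involves $G_\circ$, and it is the identity \eqref{1st} together with the symmetry of $M^\circ_\lambda$ that reconciles the two descriptions.
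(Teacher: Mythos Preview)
Your proposal is correct and follows essentially the paper's route: chain Lemma~\ref{lemmagamma}, Lemma~\ref{res} and its Corollary for the abstract maximal monotonicity, then use \eqref{1st} together with the symmetry of $M^\circ_\lambda$ to pass from the $G_\lambda$-decomposition of $u$ to the $G_\circ$-decomposition and read off $(\xi_u,\tau u_\circ)\in\Theta$. The only organizational difference is that the paper establishes $\lambda$-independence of $\xi_u$ by comparing two parameters $\lambda,\mu$ rather than working at a single fixed $\lambda$ as you do, and it is less explicit than you are about the converse inclusion.
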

\begin{proof} Defining $A_{\Theta}:=(R^{\Theta}_{\lambda})^{-1}-\lambda$, $\lambda>\lambda_{\circ}$, 
one gets a $\lambda$-independent, 
maximal monotone operator of type $\lambda_\circ$. Thus
$$
\D(A_\Theta):=\{u=
u_\lambda+G_\lambda (M^{\Theta}_{\lambda})^{-1}(\tau u_{\lambda})\}\,,
$$
$$
(A+\lambda)(u)=(A_{\circ}+\lambda)u_{\lambda}\,.
$$
Let us now pose 
$\xi_{u}(\lambda):=(M^{\Theta}_{\lambda})^{-1}\circ\tau u_{\lambda}$, 
so that $u\in\D(A_{\Theta})$ if and only if, for any $\lambda>\omega$, 
$u=u_{\lambda}+G_{\lambda}\xi_{u}(\lambda)$, $u_{\lambda}\in\D(A_{\circ})$ such that  
$$(\xi_{u}(\lambda),\tau u_{\lambda})\in\Theta+\text{graph}((\lambda-\lambda_{\circ})G_{\circ}^{*}G_{\lambda}\xi_{u})\,.
$$ 
Therefore, by \eqref{1st}, 
$$
u_{\lambda}-u_{\mu}=G_{\mu}\xi_{u}(\mu)-G_{\lambda}\xi_{u}(\lambda)=
G_{\lambda}(\xi_{u}(\mu)-\xi_{u}(\lambda))+(\lambda-\mu)R^{\circ}_{\lambda}G_{\mu}\xi_{u}(\mu)\,.
$$
By \eqref{cap}, since $G_{\lambda}$ is injective, this gives $\xi_{u}(\mu)=\xi_{u}(\lambda)\equiv\xi_{u}$. 
Thus
$$
u=u_{\circ}+G_\circ\xi_{u}\,,
$$
where
$$u_{\circ}=u_{\lambda}+
(G_{\lambda}-G_{\circ})\xi_{u}
$$
and
$$
(\xi_{u},\tau u_{\circ})=(\xi_{u},\tau u_{\lambda}+(\lambda_{\circ}-\lambda)G_\circ^{*}G_{\lambda}\xi_{u})\in
\Theta\,.
$$
Then, by \eqref{2st}
\begin{align*}
A_{\Theta}(u)=A_{\circ}u_{\lambda}-\lambda G_{\lambda}\xi_{u}
=
A_{\circ}u_{\circ}+A_{\circ}(G_{\circ}-G_{\lambda})\xi_{u}-\lambda G_{\lambda}\xi_{u}
=
A_{\circ}u_{\circ}-\lambda_{\circ} G_{\circ}\xi_{u}\,.
\end{align*}
\end{proof}
\begin{remark}\label{char} By the characterization of $S^*$ given in \cite[Theorem 3.1]{[P04]}  one has 
$$
\D(S^{*})=\{u\in\H:u=
u_\circ+G_\circ\xi\,,\ u_\circ\in \D(A_{\circ})\,,\ \xi\in\fh\}\,,
$$
$$
S^{*}u=A_{\circ}u_\circ-\lambda_{\circ}G_{\circ}\xi
$$
and so
$$A_{\Theta}\subset S^*\,.
$$
Moreover
$$
S\subset A_{\Theta}\iff \N\subseteq\D(A_{\Theta}) \iff   
(0,0)\in\Theta\,\Longrightarrow\, \overline{\D(A_{\Theta})}=\H\,.
$$ 
Hence if $(0,0)\in\Theta$ then $A_{\Theta}$ generates a one-parameter continuous nonlinear semigroup defined on the whole Hilbert space $\H$.  \par
Since 
$$
\D(A_{\circ})\cap\D(A_{\Theta})=\{u\in\D(A_{\circ}):(0,\tau u)\in\Theta\}\,,
$$
one has
$$
\D(A_{\circ})\cap\D(A_{\Theta})\not=\emptyset\iff 0\in\D(\Theta)
$$
and 
$$
\forall u\in \D(A_{\circ})\cap\D(A_{\Theta})\,,\quad A_{\Theta}(u)=A_{\circ}u\,.
$$
Moreover, since $\{u\in\D(A_{\circ}):(0,\tau u)\in\Theta\}$ is closed and convex and $\tau$ is linear continuous, the set  $\D(A_{\circ})\cap\D(A_{\Theta})$ is convex and closed in $\H_{\circ}$.
\end{remark}
\begin{remark} If $\Theta^{-1}$ is single-valued then
$$
(A_{\Theta}+{\lambda_{\circ}})^{-1}=(A_{\circ}+{\lambda_{\circ}})^{-1}+G_{\circ}\Theta^{-1}
\!\circ G_{\circ}^{*}\,.
$$
\end{remark}
\begin{remark} Let $S$ be strictly positive (i.e $\omega<0$) and take $\lambda_{\circ}\in(\omega,0)$.  Further suppose that there exists $\xi$ such that $(\xi,\lambda_{\circ} G_{\circ}^{*}G_{\circ}\xi)\in\Theta$. Then the equation $A_{\Theta}u=0$ 
has the (necessarily unique) solution $u_{\infty}:=(\lambda_{\circ} A^{-1}_{\circ}+1)G_{\circ}\xi$ (notice that $u_{\infty}=0$ whenever $(0,0)\in\Theta$). Then, by \cite[Th\'eor\`eme 3.9]{[Br2]}, one obtains 
$$
\forall u\in\overline{\D(A_{\Theta})}\,,\quad \lim_{t\to+\infty}S^{\Theta}_t(u)=u_{\infty}\,, 
$$
more precisely
$$
\forall u\in\overline{\D(A_{\Theta})}\,,\quad 
\|S^{\Theta}_t(u) -u_{\infty}\|\le e^{\lambda_{\circ} t}\,\|u -u_\infty\|\,, 
$$
$$
\forall u\in\D(A_{\Theta})\,,\quad \left\|\frac{d^{+}}{dt}\, S^{\Theta}_{t}(u)\right\|\le e^{\lambda_{\circ}t}\|A_{\Theta}(u)\|\,,
$$
where $S_t^{\Theta}$ denotes the nonlinear semigroup of contractions generated by $A_{\Theta}$.
\end{remark}
Before stating the following convergence result, we recall the following definition: given the sequence 
$\{\Theta_{n}\}_{1}^{\infty}$, $\Theta_{n}\subset\fh\times\fh$, the relation $\liminf \Theta_{n}\subset\fh\times\fh$ is defined as the set of all couples $(\xi,\t\xi)\in\fh\times\fh$ such that there are sequences $\{\xi_{n}\}_{1}^{\infty}$, 
$\{\t\xi_{n}\}_{1}^{\infty}$, with $(\xi_{n},\t\xi_{n})\in\Theta_{n}$, $(\xi_{n},\t\xi_{n})\to (\xi,\t\xi)$ as $n\uparrow\infty$. 
\begin{lemma}\label{conv} Given the semi-bounded self-adjoint operator $A_{\circ}\ge-\omega$ and  the bounded surjective operators $\tau_n:\H_{\circ}\to\fh$ and  $\tau:\H_{\circ}\to\fh$, with kernels $\N_{n}$ and $\N$ dense in $\H$, define the symmetric operators $S_n:=A_{\circ}|\N_{n}$ and $S:=A_{\circ}|\N$. Given the maximal monotone relations $\Theta_n\subset \fh\times\fh$, $\Theta\subset \fh\times\fh$, let $A_{\Theta_n}$ and $A_{\Theta}$ be the maximal monotone operators of type $\lambda_{\circ}>\omega$ provided by Theorem \ref{estensioni}. If $\tau_{n}$ strongly converges to $\tau$ and $\Theta\subset \liminf \Theta_{n}$ then 
\begin{equation*}
\forall T\ge 0\,,\quad \forall u\in\overline{\D(A_{\Theta})}\,,\quad \lim_{n\to +\infty}\, \sup_{0\le t\le T}\|S^{\Theta_{n}}_{t}(u_{n})-S^{\Theta}_t(u)\|=0\,,
\end{equation*}
where $S^{\Theta_{n}}_{t}$ denotes the semi-group generated by $A_{\Theta_{n}}$, $u_{n}\in\overline{\D(A_{\Theta_{n}})}$ and $\|u_{n}-u\|\to 0$.
\end{lemma}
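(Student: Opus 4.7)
The plan is to reduce semigroup convergence to pointwise resolvent convergence via the nonlinear Trotter--Kato theorem, and then verify resolvent convergence by direct analysis of the explicit formula of Theorem \ref{estensioni}.

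By the nonlinear Trotter--Kato theorem for maximal monotone operators of a common type (see \cite[Th\'eor\`eme~3.16 and the subsequent semigroup convergence results]{[Br2]} or Pazy's Lecture 7), if $A_n,A$ are maximal monotone of uniform type $\lambda_\circ$ and $(A_n+\lambda)^{-1}u\to(A+\lambda)^{-1}u$ strongly for some $\lambda>\lambda_\circ$ and every $u$ in a dense subset of $\H$, then $S_t^{A_n}(u_n)\to S_t^{A}(u)$ uniformly on compact time intervals whenever $u_n\to u$ with $u_n\in\overline{\D(A_n)}$, $u\in\overline{\D(A)}$. Since $R^{\Theta_n}_\lambda$ is $(\lambda-\lambda_\circ)^{-1}$-Lipschitz with constant independent of $n$, pointwise strong convergence propagates from any dense subset to all of $\H$. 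The task thus reduces to showing, for any fixed $\lambda>\lambda_\circ$ and every $u\in\H$, that $R^{\Theta_n}_\lambda u\to R^{\Theta}_\lambda u$ in $\H$.

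I would then apply Theorem \ref{estensioni} to write $R^{\Theta_n}_\lambda u=R^{\circ}_\lambda u+G^{(n)}_\lambda \xi_n$ with $\xi_n=(M^{\Theta_n}_\lambda)^{-1}(G^{(n)*}_\lambda u)$ and $G^{(n)}_\lambda=(\tau_n R^{\circ}_\lambda)^*$, and similarly at the limit. Strong convergence $\tau_n\to\tau$ together with the uniform boundedness principle yields $G^{(n)*}_\lambda v\to G^{*}_\lambda v$ strongly in $\fh$ for every $v\in\H$ and $\sup_n\|G^{(n)}_\lambda\|<\infty$. To bound $|\xi_n|$ I fix any $(\xi_0,\eta_0)\in\Theta$ and, by the $\liminf$ hypothesis, choose $(\xi_0^n,\eta_0^n)\in\Theta_n$ converging to it; testing the defining relation $\tilde\eta_n+(\lambda-\lambda_\circ)(G^{(n)}_\circ)^*G^{(n)}_\lambda\xi_n=G^{(n)*}_\lambda u$, with $\tilde\eta_n\in\Theta_n(\xi_n)$, against $\xi_n-\xi_0^n$, and combining monotonicity of $\Theta_n$ with the coercivity
$$\langle G^{(n)}_\circ \xi,G^{(n)}_\lambda \xi\rangle\ge\frac{\lambda_\circ-\omega}{\lambda-\omega}\,\|G^{(n)}_\circ \xi\|^2$$
extracted from the proof of Lemma \ref{lemmagamma}, delivers a uniform bound on $\|G^{(n)}_\circ(\xi_n-\xi_0^n)\|$ and hence on $|\xi_n|$. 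Extracting a weakly convergent subsequence $\xi_n\rightharpoonup\xi^*$, I identify $\xi^*=\xi$ by a Minty-type argument: for any $(\zeta,\tilde\zeta)\in\Theta$, picking approximants $(\zeta_n,\tilde\zeta_n)\in\Theta_n$ via the $\liminf$ hypothesis, monotonicity of $\Theta_n$ gives $[\xi_n-\zeta_n,\tilde\eta_n-\tilde\zeta_n]\ge0$; passing to the limit (using strong convergence of $G^{(n)*}_\lambda$ paired against weakly convergent $\xi_n$) yields the monotonicity inequality characterizing $\xi$, and maximality of $\Theta$ forces $\xi^*=\xi$. Reusing the coercivity bound and a Radon--Riesz-type argument upgrades the weak convergence to strong convergence $G^{(n)}_\lambda\xi_n\to G_\lambda\xi$ in $\H$, completing the resolvent convergence.

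The principal obstacle is precisely this last upgrade: strong convergence of $\tau_n$ yields convergence of the adjoints $G^{(n)}_\lambda$ to $G_\lambda$ only in the weak operator topology, so strong convergence of $G^{(n)}_\lambda \xi_n$ is not automatic and must be squeezed out of the coercivity estimate of Lemma \ref{lemmagamma}. A related technical point is that the constant $\gamma^{(n)}_0$ in the lower bound $\|G^{(n)}_\circ \xi\|\ge\gamma^{(n)}_0|\xi|$ appearing in Lemma \ref{lemmagamma} need not be uniform in $n$; this subtlety is circumvented by working with the combined monotonicity-plus-coercivity estimate against the comparison point $(\xi^n_0,\eta^n_0)$ rather than with bounds on $\xi_n$ in isolation.
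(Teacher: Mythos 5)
Your overall architecture is the same as the paper's: reduce to pointwise strong convergence of the nonlinear resolvents via the nonlinear Trotter--Kato theorem, and obtain the latter from the explicit formula of Theorem \ref{estensioni} together with convergence of $(M^{\Theta_n}_{\lambda})^{-1}$. The difference is one of packaging. The paper observes that $M^{\Theta}_{\lambda}\subset\liminf\left(\Theta_n+(\lambda-\lambda_{\circ})G^{*}_{n,\circ}G_{n,\lambda}\right)$ and then cites \cite[Proposition 4.4]{[Barb]} (graph convergence of maximal monotone relations implies convergence of resolvents) to conclude $(M^{\Theta_n}_{\lambda})^{-1}(\xi)\to(M^{\Theta}_{\lambda})^{-1}(\xi)$, whereas you unfold that citation into an a priori bound plus a Minty identification of the weak limit. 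This is a legitimate, more self-contained route, and it has the merit of exposing two points the paper passes over in one line: the paper asserts that $G_{n,\lambda}=(\tau_n R^{\circ}_{\lambda})^{*}$ converges \emph{strongly}, although strong operator convergence of $\tau_n$ gives in general only weak operator convergence of the adjoints (the assertion is automatic when $\fh$ is finite dimensional, or when $\tau_n\to\tau$ in norm, or when $\tau_n=\tau$ as in Remark \ref{moreau}); and it applies the cited result to the plain inverses $(M^{\Theta_n}_{\lambda})^{-1}$ rather than to resolvents of the form $(1+\mu\,\cdot)^{-1}$, which implicitly uses a strong-monotonicity constant uniform in $n$.

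That said, your own treatment of these two points does not close as described. Testing the equation for $\xi_n$ against the approximants $(\xi_0^n,\eta_0^n)$ yields an inequality of the shape
$$
(\lambda-\lambda_{\circ})\,\frac{\lambda_{\circ}-\omega}{\lambda-\omega}\,\|G_{n,\circ}(\xi_n-\xi_0^n)\|^{2}\le C\left(1+|\xi_n-\xi_0^n|\right),
$$
and to turn this into the bound on $|\xi_n|$ that you need in order to extract a weakly convergent subsequence for the Minty step, you must still invoke $\|G_{n,\circ}\zeta\|\ge\gamma_0^{(n)}|\zeta|$ with $\inf_n\gamma_0^{(n)}>0$; working ``against the comparison point'' relocates, but does not remove, this uniformity requirement. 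Likewise, the final Radon--Riesz upgrade requires $\|G_{n,\lambda}\xi_n\|\to\|G_{\lambda}\xi\|$, and you do not say where this norm identity comes from: what the hypotheses give for free is only $G_{n,\lambda}\xi_n\rightharpoonup G_{\lambda}\xi$ (pair $\xi_n\rightharpoonup\xi$ against the strongly convergent $G^{*}_{n,\lambda}v$). To make the argument complete in the stated generality you should either strengthen the hypothesis on $\tau_n$ (norm convergence, or a uniform surjectivity bound $\sup_n\|(\tau_n\tau_n^{*})^{-1}\|<\infty$, either of which yields both the uniform coercivity and the strong convergence of $G_{n,\lambda}$) or supply the missing norm convergence directly; as written, this step is asserted rather than proved, both in your proposal and in the paper.
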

\begin{proof}
By our hypothesis on $\tau_n$,
$G_{n,\lambda}:=(\tau_n (A_{\circ}+\lambda)^{-1})^{*}$ and $G_{n,\lambda}^{*}$ 
strongly converge to
$G_\lambda$ and $G_\lambda^{*}$ respectively. 
This implies that $(\lambda-\lambda_{\circ})G_{n,\circ}^{*}G_{\lambda}$ strongly converges to 
$(\lambda-\lambda_{\circ})G_{\circ}^{*}G_{\lambda}$ and hence 
$M^{\Theta}_{\lambda}\subset\liminf(\Theta_{n}+(\lambda-\lambda_{\circ})G^{*}_{n,\circ}G_{\lambda})$. Therefore (see e.g. \cite[Proposition 4.4]{[Barb]}) 
$$
\forall\xi\in\fh\,,\qquad
\lim_{n\uparrow\infty}(\Theta_{n}+(\lambda-\lambda_{\circ})G_{n,\circ}^{*}G_{\lambda})^{-1}(\xi)=(M^{\Theta}_{\lambda})^{-1}(\xi)\,.
$$ 
The thesis then follows by
the resolvent formula provided in Theorem \ref{estensioni} and by the nonlinear Trotter-Kato Theorem (see e.g. \cite[Th\'eor\`eme 3.16]{[Br2]}). 
\end{proof}
\begin{remark}\label{complex} Let $A:\D(A)\subseteq \H_{\C}\to\H_{\C}$, where  
$\H_{\C}$ is a complex Hilbert space. Writing $\H_{\C}=\H_{\RE}+i\,\H_{\RE}$, where $\H_{\RE}$ is the realification of $\H_{\C}$ and defining $A_{1}$ and $A_{2}$ by the relation $
A(u_{1}+iu_{2})=A_{1}(u_{1},u_{2})+iA_{2}(u_{1},u_{2})$, the nonlinear  operator $A$ is said to be (maximal) monotone whenever its realification $A_{\RE}(u_{1}\oplus u_{2}):=A_{1}(u_1,u_{2})
\oplus A_{2}(u_{1},u_{2})$ is (maximal) monotone in the real Hilbert space $\H_{\RE}\oplus \H_{\RE}$. Thus the whole theory of maximal monotone operators in real Hilbert spaces extends, with the obvious modifications, to complex spaces and one can readily extend Theorem \ref{estensioni} to complex Hilbert spaces.  Moreover, since any skew-adjoint linear operator $W:\D(W)\subseteq
\H_{\C}\to\H_{\C}$ is maximal monotone, one also obtains a version of  Theorem \ref{estensioni} (having the same proof and statement; it suffices to replace $S$ with $W|\N$ and $A_{\circ}$ with $W$) providing maximal monotone extensions of skew-symmetric operators in complex Hilbert spaces (linear skew-adjoint extensions of skew-symmetric operators corresponding to abstract wave equations have been studied in \cite{[P05]}). By Remark \ref{char}, such extensions are maximal monotone restrictions of the linear operator $(W|\N)^{*}$ and so, in the case of skew-adjoint operators  in the complex Hilbert space $H_{0}\oplus H_{1}$ of the kind $W(u_{0}\oplus u_{1})=Du_{1}\oplus Gu_{0}$  (here $D=-G^{*}$, $D:\D(D)\subseteq H_{1}\to H_{0}$, $G:\D(G)\subseteq H_{0}\to H_{1}$),  one recovers the maximal monotones operators characterized in the recent paper \cite{[Tro]} (the author got knowledge of \cite{[Tro]} after the completion of this paper; he thanks Sascha Trostorff for the communication). 
\end{remark}
\section{Sub-potential extensions}
Let $\varphi:\fh\to(-\infty,+\infty]$ be a proper (i.e. not identically $+\infty$) convex function and let us define its (not empty) effective domain by $$\D(\varphi):=\{\xi\in\fh:\varphi(\xi)<+\infty\}\,;
$$
its {\it sub-differential} $\partial\varphi\subset \fh\times\fh$ is then defined by 
\begin{align*}
&\partial\varphi:=\{(\xi,\t\xi)\in\fh\times\fh:\forall \zeta\in\fh\,,\ \varphi(\xi) \le  \varphi(\zeta)+[ \t \xi,\xi-\zeta]\,\}
\\
\equiv&\{(\xi,\t\xi)\in\D(\varphi)\times\fh:\forall \zeta\in\D(\varphi)\,,\ \varphi(\xi)- \varphi(\zeta) \le [ \t \xi,\xi-\zeta]\,\}
\,.
\end{align*}
(here $[\cdot,\cdot]$ denotes the scalar product in the Hilbert space $\fh$).
Notice that $(\xi,0)\in\partial\varphi$ if and only if $\xi$ is a minimum point of $\varphi$. Also
notice that if 
$\varphi$ is G\^ ateaux-differentiable at $\xi$ then $\partial\varphi(\xi)=\nabla\varphi(\xi)$; so if $\varphi$ is everywhere G\^ ateaux-differentiable then $\partial\varphi=\nabla\varphi$. Sub-differentials of lower semi-continuous functions provide examples of maximal monotone operators (see e.g. \cite[Example 2.3.4., Proposition 2.12]{[Br2]}, \cite[Proposition 1.6]{[Barb]}): if $\varphi$ is lower semi-continuous then  $\partial\varphi$ is maximal monotone and $\text{\rm int}(\D(\partial\varphi))=\text{\rm int}(\D(\varphi))$, $\overline{\D(\partial\varphi)}=\overline{\D(\varphi)}$.
\par
An operator $\Theta=\partial\varphi$, $\varphi$ a proper and convex function, is called a {\it sub-potential monotone operator\,}; if $\Theta$ is maximal (this holds whenever $\varphi$ is lower semi-continuous) then we say that it is a {\it sub-potential maximal monotone operator}. 
\begin{remark}\label{sub} Let $\varphi$ be proper convex and let $\bar\varphi$
be its lower semi-continuous regularization, i.e. $\bar\varphi$ is the largest lower semi-continuous minorant of $\varphi$: $\text{epi$(\bar\varphi)$}=\overline{\text{epi$(\varphi)$}}$,
where the epigraph is defined by $\text{epi$(f)$}:=\{(\xi,\lambda)\in\fh\times \RE: f(\xi)\le \lambda\}$.
Then $\partial\varphi\subseteq\partial\bar\varphi$ and so $\partial\varphi=\partial\bar\varphi$ whenever $\partial\varphi$ is maximal monotone.
\end{remark}
Suppose that $L:\D(L)\subseteq \fh\to\fh$ is a non negative linear self-adjoint operator, so that it is maximal monotone. Then (see e.g. \cite[Proposition 2.15]{[Br2]}) $L=\partial\varphi_{L}$, where $\varphi_{L}:\fh\to [0,+\infty]$ is the proper  lower semi-continuous  convex function 
$$
\varphi_{L}:\fh\to[0,+\infty]\,,\quad \varphi_{L}(\xi):=\begin{cases}
\frac12\,| L ^{\frac12}\,\xi|^{2}\,,&\xi\in\D( L^{\frac12})\\
+\infty\,,&\text{otherwise}\,.
\end{cases}
$$
Hence one gets that $\xi\in\D( L^{\frac12})$ belongs to $\D(L)$ if and only if there exists $\tilde\xi\in\fh$ 
such that $\frac12\,| L^{\frac12} \,\xi|^{2}-\frac12\,| L^{\frac12} \,\zeta|^{2}\le[\tilde\xi,\xi-\zeta]$ for all $\zeta\in\D( L^{\frac12})$. In this case $L\xi=\tilde\xi$.
\par
Suppose that in Theorem \ref{estensioni} one has $\Theta=L$, $L$ a non negative 
linear self-adjoint operator and range$(G_{\circ})\cap\D((A_{\circ}+\lambda_{\circ})^{\frac12})=\{0\}$.
Then, by Theorem 2.4 in \cite{[P12]}, $A_{\Theta}+\lambda_{\circ}=\partial\Phi_{\circ}$, where the proper  convex function $\Phi_{\circ}:\H\to(-\infty,+\infty]$ is defined by
$$
\Phi_{\circ}(u):=\begin{cases}
\frac12\,\|(A_{\circ}+\lambda_{\circ})^{\frac12}u_{\circ}\|^{2}+\frac12\,|L^{\frac12} \,\xi|^{2}\,,&u\in\D(\Phi_{\circ})\\
+\infty\,,&\text{otherwise}\,,
\end{cases}
$$
$$
\D(\Phi_{\circ}):=\{u\in\H:u=u_{\circ}+G_{\circ}\xi\,,\ u_{\circ}\in\D((A_{\circ}+\lambda_{\circ})^{\frac12})\,,\ \xi\in\D(L^{\frac12})\}\,.
$$
Notice that the hypothesis range$(G_{\circ})\cap\D((A_{\circ}+\lambda_{\circ})^{\frac12})=\{0\}$ is needed in order that 
$\D(\Phi_{\circ})$ is well-defined. Also notice that $\Phi_{\circ}$ is lower semi-continuous since 
$\Phi_{\circ}(u)=\frac12\,\|(A_{\circ}+\lambda_{\circ})^{\frac12}u\|^{2}$ for any $u\in \D((A_{\circ}+\lambda_{\circ})^{\frac12})\equiv\D(\Phi_{\circ})$.
\par
A similar result holds in the nonlinear case:
\begin{theorem}\label{cyclic}
Let $\Theta=\partial\varphi\subset\fh\times\fh$ be a sub-potential maximal monotone operator and let $A_{\Theta}$ be defined as in Theorem \ref{estensioni}. Suppose that $\text{\rm range}(G_{\circ})\cap\D((A_{\circ}+\lambda_{\circ})^{\frac12})=\{0\}$ and define the proper convex function 
\begin{equation}\label{PHI}
\Phi:\H\to(-\infty,+\infty]\,,\ 
\Phi(u):=\begin{cases}\frac12\,\|(A_{\circ}+\lambda_{\circ})^{\frac12}u_{\circ}\|^{2}
+\varphi(\xi)&u\in\D(\Phi)\\
+\infty&\text{otherwise},
\end{cases}
\end{equation}
where
$$
\D(\Phi):=\{u\in\H:u=u_{\circ}+G_{\circ}\xi\,,\ u_{\circ}\in\D((A_{\circ}+\lambda_{\circ})^{\frac12})\,,\ \xi\in\D(\varphi)\}\,.
$$  
Then $A_{\Theta}+\lambda_{\circ}$ is a sub-potential maximal monotone operator: 
\begin{equation*}
A_{\Theta}+\lambda_{\circ}=\partial\Phi
\,.
\end{equation*}
\end{theorem}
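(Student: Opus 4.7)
The plan is to mirror, in the nonlinear setting, the argument sketched in the text for the linear case ($\Theta=L$, $\varphi=\varphi_{L}$) borrowed from \cite{[P12]}: prove directly that $A_{\Theta}+\lambda_{\circ}\subseteq\partial\Phi$ and then close by maximality. The whole computation rests on the identity
\[
\langle(A_{\circ}+\lambda_{\circ})u_{\circ},G_{\circ}\eta\rangle=[\tau u_{\circ},\eta]\,,\qquad u_{\circ}\in\D(A_{\circ})\,,\ \eta\in\fh\,,
\]
which follows from $G_{\circ}^{*}=\tau R^{\circ}_{\lambda_{\circ}}$ together with $R^{\circ}_{\lambda_{\circ}}(A_{\circ}+\lambda_{\circ})u_{\circ}=u_{\circ}$.

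For $u\in\D(A_{\Theta})$, Theorem \ref{estensioni} provides the decomposition $u=u_{\circ}+G_{\circ}\xi_{u}$ with $u_{\circ}\in\D(A_{\circ})\subseteq\D((A_{\circ}+\lambda_{\circ})^{1/2})$, $(\xi_{u},\tau u_{\circ})\in\partial\varphi$, and $(A_{\Theta}+\lambda_{\circ})(u)=(A_{\circ}+\lambda_{\circ})u_{\circ}$; in particular $u\in\D(\Phi)$. For any $v=v_{\circ}+G_{\circ}\zeta\in\D(\Phi)$, the convex quadratic form $\frac12\|(A_{\circ}+\lambda_{\circ})^{1/2}\cdot\|^{2}$ is differentiable at $u_{\circ}\in\D(A_{\circ})$ with gradient $(A_{\circ}+\lambda_{\circ})u_{\circ}$, so
\[
\frac12\|(A_{\circ}+\lambda_{\circ})^{1/2}u_{\circ}\|^{2}-\frac12\|(A_{\circ}+\lambda_{\circ})^{1/2}v_{\circ}\|^{2}\le\langle(A_{\circ}+\lambda_{\circ})u_{\circ},u_{\circ}-v_{\circ}\rangle\,,
\]
while the sub-differential inclusion for $\varphi$ gives $\varphi(\xi_{u})-\varphi(\zeta)\le[\tau u_{\circ},\xi_{u}-\zeta]$. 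Adding these and using the identity above to rewrite $[\tau u_{\circ},\xi_{u}-\zeta]=\langle(A_{\circ}+\lambda_{\circ})u_{\circ},G_{\circ}(\xi_{u}-\zeta)\rangle$ collapses the right-hand side to $\langle(A_{\circ}+\lambda_{\circ})u_{\circ},u-v\rangle$, proving $(A_{\Theta}+\lambda_{\circ})(u)\in\partial\Phi(u)$.

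To upgrade this inclusion to an equality I need $\partial\Phi$ to be maximal monotone, i.e.\ $\Phi$ proper convex lower semicontinuous. Properness and convexity follow once the definition is seen to be unambiguous: any two decompositions of the same $u$ differ by an element of $\text{range}(G_{\circ})\cap\D((A_{\circ}+\lambda_{\circ})^{1/2})=\{0\}$, and $G_{\circ}$ is injective because $G_{\circ}^{*}=\tau R^{\circ}_{\lambda_{\circ}}$ is surjective. The main technical obstacle is lower semicontinuity, which I plan to establish by a weak-compactness argument: along any sequence $u_{n}\to u$ in $\H$ with $\sup_{n}\Phi(u_{n})<\infty$, the condition $\lambda_{\circ}>\omega$ converts the bound on $\|(A_{\circ}+\lambda_{\circ})^{1/2}u_{\circ,n}\|$ into a bound on $\|u_{\circ,n}\|$, hence on $\|G_{\circ}\xi_{n}\|=\|u_{n}-u_{\circ,n}\|$; the closed range theorem applied to the surjective $G_{\circ}^{*}$, combined with injectivity of $G_{\circ}$, yields a bounded left inverse of $G_{\circ}$, so $\|\xi_{n}\|$ is bounded as well. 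Extracting weak limits $u_{\circ,n}\rightharpoonup u'_{\circ}$, $(A_{\circ}+\lambda_{\circ})^{1/2}u_{\circ,n}\rightharpoonup w$, $\xi_{n}\rightharpoonup\xi'$, closedness of $(A_{\circ}+\lambda_{\circ})^{1/2}$ forces $w=(A_{\circ}+\lambda_{\circ})^{1/2}u'_{\circ}$, and uniqueness of decomposition identifies $u=u'_{\circ}+G_{\circ}\xi'$ as the decomposition of $u$; weak lower semicontinuity of $\|\cdot\|^{2}$ and of the convex lower semicontinuous $\varphi$ then delivers $\Phi(u)\le\liminf_{n}\Phi(u_{n})$. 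Since $A_{\Theta}+\lambda_{\circ}$ is maximal monotone by Theorem \ref{estensioni} and is contained in the maximal monotone operator $\partial\Phi$, the two must coincide.
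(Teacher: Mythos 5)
Your proof is correct, and its first half --- establishing $\text{graph}(A_{\Theta}+\lambda_{\circ})\subseteq\partial\Phi$ from the identity $G_{\circ}^{*}(A_{\circ}+\lambda_{\circ})u_{\circ}=\tau u_{\circ}$, the subdifferential inequality for $\varphi$, and the convexity inequality for the quadratic form at $u_{\circ}\in\D(A_{\circ})$ --- is essentially the paper's computation. Where you genuinely diverge is the reverse inclusion. The paper proves it constructively: given $(u,\tilde u)\in\partial\Phi$ with $u=u_{\circ}+G_{\circ}\xi$, it first tests against $v=v_{\circ}+G_{\circ}\xi$ (varying only $v_{\circ}$) to conclude $u_{\circ}\in\D(A_{\circ})$ and $\tilde u=(A_{\circ}+\lambda_{\circ})u_{\circ}$, then against $v=u_{\circ}+G_{\circ}\zeta$ to conclude $(\xi,G_{\circ}^{*}\tilde u)=(\xi,\tau u_{\circ})\in\partial\varphi$; this identifies $\partial\Phi$ explicitly and recovers the description of $\D(A_{\Theta})$ directly. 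You instead close by maximality of $A_{\Theta}+\lambda_{\circ}$, which Theorem \ref{estensioni} supplies. That is legitimate and shorter, but observe that for this step you only need $\partial\Phi$ to be \emph{monotone} --- automatic for the subdifferential of any proper convex function --- since a maximal monotone relation cannot be properly contained in a monotone one. Your whole weak-compactness argument for lower semicontinuity of $\Phi$ is therefore dispensable, which is fortunate, because as written it has a small circularity: the bound on $\|(A_{\circ}+\lambda_{\circ})^{\frac12}u_{\circ,n}\|$ does not follow immediately from $\sup_{n}\Phi(u_{n})<+\infty$ when $\varphi$ is unbounded below; one must first control $|\xi_{n}|$ by combining the affine minorant of $\varphi$ with the lower bound $\|G_{\circ}\xi\|\ge\gamma_{0}|\xi|$ and then close the loop (the linear growth of $|\xi_{n}|$ against the square-root growth of the right-hand side does resolve it, but you should say so). Note also that the paper never claims $\Phi$ itself is lower semicontinuous: Remark \ref{Remark 4.7} passes to the regularization $\bar\Phi$ and uses $\partial\Phi=\partial\bar\Phi$. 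So either repair the estimate if you want the (stronger) lsc statement, or simply delete that paragraph and replace ``maximal monotone operator $\partial\Phi$'' by ``monotone relation $\partial\Phi$'' in your final sentence.
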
 
\begin{proof} Let us take $u=u_{\circ}+G_{\circ}\xi\in\D(A_{\Theta})$ and $v=v_{\circ}+G_{\circ}\zeta\in\D(\Phi)$. Then, by the definition of $A_{\Theta}$ and since $(\xi,\tau u_{\circ})\in\Theta=\partial\varphi$, one gets
\begin{align*}
&\langle (A_{\Theta}+\lambda_{\circ})(u), u-v\rangle
=\langle (A_{\circ}+\lambda_{\circ})u_{\circ}, u-v\rangle
\\=&
\langle (A_{\circ}+\lambda_{\circ})u_{\circ}, u_{\circ}-v_{\circ}\rangle+
\langle (A_{\circ}+\lambda_{\circ})u_{\circ}, G_{\circ}(\xi-\zeta)\rangle\\
=&\langle (A_{\circ}+\lambda_{\circ})u_{\circ}, u_{\circ}-v_{\circ}\rangle+
[G_{\circ}^{*}(A_{\circ}+\lambda_{\circ})u_{\circ}, \xi-\zeta]\\
=&\langle (A_{\circ}+\lambda_{\circ})u_{\circ}, u_{\circ}-v_{\circ}\rangle+
[\tau u_{\circ },\xi-\zeta]
\ge
\langle (A_{\circ}+\lambda_{\circ})u_{\circ}, u_{\circ}-v_{\circ}\rangle+\varphi(\xi)-\varphi(\zeta)\\
=&\frac12\,\|(A_{\circ}+\lambda_{\circ})^{\frac12}u_{\circ}\|^{2}+\varphi(\xi)
-\left(\frac12\,\|(A_{\circ}+\lambda_{\circ})^{\frac12}v_{\circ}\|^{2}+\varphi(\zeta)\right)
+
\frac12\,\|(A_{\circ}+\lambda_{\circ})^{\frac12}(u_{\circ}-v_{\circ})\|^{2}\\
\ge&\Phi(u)-\Phi(v)\,.
\end{align*}
Therefore graph$(A_{\Theta}+\lambda_{\circ})\subseteq\partial\Phi$. 
Let us now take $(u,\tilde u)\in\partial\Phi$, where $u=u_{\circ}+G_{\circ}\xi$ with $u_{\circ}\in\D((A_{\circ}+\lambda_{\circ})^{\frac12})$ and $\xi\in\D(\varphi)$.   Then, for any $v_{\circ}\in\D((A_{\circ}+\lambda_{\circ})^{\frac12})$ 
and setting $v:=v_{\circ}+G_{\circ}\xi$, one gets
\begin{align*}
&\langle\tilde u,u_{\circ}-v_{\circ}\rangle=\langle\tilde u,u-v\rangle\ge\Phi(u)-\Phi(v)
=
\frac12\,\|(A_{\circ}+{\lambda_{\circ}})^{\frac12}\,u_{\circ}\|^{2}
-\frac12\,\|(A_{\circ}+{\lambda_{\circ}})^{\frac12}\,v_{\circ}\|^{2}\,.
\end{align*}
Thus $u_{\circ}\in\D(A_{\circ})$ and $\tilde u=(A_{\circ}+\lambda_{\circ})u_{\circ}$. Next, for any $\zeta\in\D(\varphi)$, now setting $v:=u_{\circ}+G_{\circ}\zeta$, one gets
\begin{align*}
&[G_{\circ}^{*}\tilde u,\xi-\zeta]=\langle\tilde u,G_{\circ}\xi-G_{\circ}\zeta\rangle
\ge\Phi(u)-\Phi(v)=\varphi(\xi)-\varphi(\zeta)\,.
\end{align*}
Thus $(\xi,G_{\circ}^{*}\tilde u)\in\partial\varphi=\Theta$. Since $G_{\circ}^{*}\tilde u=\tau R^{\circ}_{\lambda_{\circ}}\tilde u=\tau u_{\circ}$, one obtains $(u,\tilde u)\in$graph$(A_{\Theta}+\lambda_{\circ})$ and so in conclusion graph$(A_{\Theta}+\lambda_{\circ})=\partial\Phi$. 
\end{proof}
\begin{remark}\label{Remark 4.7} By Remark \ref{sub}, in Theorem \ref{cyclic} one has 
$A_{\Theta}+\lambda_{\circ}=\partial\bar\Phi$, 
where   $\bar\Phi$ denotes the lower semi-continuous regularization of $\Phi$. Hence, in case $\lambda_{\circ}=0$ and range$(G_{\circ})\cap \D(A_{0}^{1/2})=\{0\}$, for any lower 
semi-continuous proper convex function $\varphi:\fh\to(-\infty,+\infty]$, one can define a maximal monotone operator $A_{\varphi}\subset S^{*}$ by $A_{\varphi}:=\partial\bar\Phi$, where $\Phi$ is given by \eqref{PHI}. Such a sub-differential $\partial\bar\Phi$ is fully described by Theorem \ref{estensioni}, since it coincides with the operator $A_{\partial\varphi}$.
\end{remark}
\begin{remark}\label{Remark 4.5} By the properties  of semigroups generated by sub-potential maximal monotone operators (see \cite[Chapter III, Section 3]{[Br2]} and \cite[Theorem 4.11, Corollary 4.4 and Remark 4.5]{[Barb]}), one gets the following regularity results about the nonlinear semigroup $S^{\varphi}_{t}$ generated by the nonlinear operator 
$A_\varphi:= \partial\Phi-\lambda_{\circ }=\partial\bar\Phi-\lambda_{\circ }
$ provided in Theorem \ref{cyclic}:
$$
\forall u\in\overline{\D(A_\varphi)}\,,\ \forall t>0\,,\quad S^{\varphi}_t(u)\in \D(A_\varphi)\,,
$$ 
$$
\forall u\in\overline{\D(A_\varphi)}\,,\ \forall v\in\D(A_\varphi)\,,\ \forall t>0\,,\quad
\left\|\frac{d^{+}}{dt}\, S^{\varphi}_t(u)\right\|\le\|A_\varphi v\|+\frac1t\,\|u-v\|\,,
$$
$$
\forall u\in\overline{\D(A_\varphi)}\,,\ \forall T>0\,,\quad \int_{0}^{T}t\,\left\|\frac{d\,}{dt}\, S^{\varphi}_t(u)\right\|^{2}dt<+\infty\,,
$$
$$
\forall u\in{\D(\bar\Phi)}\,,\ \forall T>0\,,\quad \int_{0}^{T}\left\|\frac{d\,}{dt}\, S^{\varphi}_t(u)\right\|^{2}dt<+\infty\,,
$$
$$
\forall u\in\overline{\D(A_\varphi)}\,,\ \forall T>0\,,\quad \int_{0}^{T}|\bar\Phi(S^{\varphi}_t(u))|\,dt<+\infty\,,
$$
$$
\forall u\in{\D(\bar\Phi)}\,,\ \forall T>0\,,\quad \int_{0}^{T}\left|\frac{d\,}{dt}\, \bar\Phi(S^{\varphi}_t(u))\right|\,dt<+\infty\,.
$$
\end{remark}
\begin{remark}\label{Remark 4.6}  Suppose $S>0$ and take $\lambda_{\circ}=0$. Then
\begin{align*}
&\text{kernel}(A_\varphi)=\{u\in\H: u=G_{\circ}\xi\,,\ (\xi,0)\in\partial\varphi\}
\\
\equiv
&
\{u\in\H: u=G_{\circ}\xi\,,\ \text{$\xi$ a minimum point of $\varphi$}\}\,.
\end{align*}
In case $\text{kernel}(A_\varphi)\not=\emptyset$, by \cite[Theorem 4]{[Bru]} one has 
\begin{equation}\label{w}
\forall u\in\overline{\D(A_\varphi)}\,, \ \exists u_{\infty}\in\text{kernel}(A_\varphi)\,:\,\text{w-}\lim_{t\to+\infty}S^{\varphi}_t(u)=u_{\infty}\,.
\end{equation}
By \cite[Theorem 5]{[Bru]}, if $\varphi$ is an even function then the above weak limit \eqref{w} becomes a strong one.
\end{remark}
\begin{remark}\label{moreau}
Let $\varphi_{\lambda}$ be the Moreau regularization of $\varphi$, i.e.
$$
\varphi_{\lambda}:\fh\to\RE\,,\quad\varphi_{\lambda}(\xi):=
\inf\left\{\frac{|\xi-\zeta|^{2}}{2\lambda}+\varphi(\zeta)\,;\ \zeta\in\fh\right\}\,.
$$
Then $\varphi_{\lambda}$ is convex and Fr\'echet differentiable on $\fh$ with $\nabla\varphi_{\lambda}=(\partial\varphi)_{\lambda}$, where $(\partial\varphi)_{\lambda}$ denotes the Yosida approximation of $\partial\varphi$, i.e. $(\partial\varphi)_{\lambda}=\frac1\lambda\,(1-(\lambda \partial\varphi+1)^{-1})$  (see e.g. \cite[Theorem 2.9]{[Barb]}). Let $\Phi_{\lambda}$ be the proper convex function defined as in \eqref{PHI} with $\varphi$ replaced by $\varphi_{\lambda}$. Then, denoting by $S^{\varphi_{\lambda}}_{t}$ the nonlinear semigroup generated by $A_{\varphi_{\lambda}}:=\partial\bar\Phi_{\lambda}$ (here we suppose $\lambda_{\circ}=0$), by Lemma \ref{conv} one has 
\begin{equation*}
\forall T\ge 0\,,\quad \forall u\in\overline{\D(A_{\varphi})}\,,\quad \lim_{\lambda\to 0}\, \sup_{0\le t\le T}\|S^{\varphi_{\lambda}}_{t}(u_{\lambda})-S^{\varphi}_t(u)\|=0\,,
\end{equation*}
where $u_{\lambda}\in\overline{\D(A_{\varphi_{\lambda}})}$ and $\|u_{\lambda}-u\|\to 0$.
\end{remark}
\section{Applications.}
\begin{subsection}{Nonlinear point perturbations of the Laplacian}\label{E1} Let $$A_{\circ}:H^2(\RE^3)\subseteq L^2(\RE^3)\to L^2(\RE^3)\,,\qquad
A_{\circ}u=-\Delta u\,,$$ 
$$\tau :H^2(\RE^3)\to\RE^n\,,\qquad
\tau u\equiv\{u(y)\}_{y\in Y}\,,$$
where $Y\subset\RE^3$ is a discrete set with $n$ elements. 
Here $H^2(\RE^3)\subset C_b(\RE^3)$ 
denotes the usual Sobolev-Hilbert space of square
integrable functions with square integrable second order
(distributional) partial derivatives. Thus we are looking for nonlinear maximal monotone extensions of the positive symmetric operator
$$
S:\D(S)\subseteq L^2(\RE^3)\to L^2(\RE^3)\,, \quad S u=-\Delta u\,,
$$
$$
\D(S):=\left\{u\in H^2(\RE^3)\,:\, u(y)=0\,,\ y\in Y \right\}\,.
$$
Since the kernel of the resolvent of $-\Delta$ is given by 
$$(-\Delta+\lambda)^{-1}(x_{1},x_{2})
=\frac{e^{-\sqrt \lambda\,|x_{1}-x_{2}|}}{4\pi|x_{1}-x_{2}|}\,,\qquad \lambda>0\,,$$ 
one has
\begin{equation*}
G_{\lambda}:\RE^n\to L^2(\RE^3)\,,\qquad [G_{\lambda}\xi](x)=\sum_{y\in Y}
\frac{e^{-\sqrt \lambda\,|x-y|}}{4\pi|x-y|}\
\xi_y\,,\quad \xi\equiv\{\xi_{y}\}_{y\in Y}\,,
\end{equation*}
and
\begin{equation*}
G_{\lambda}^*:L^2(\RE^3)\to\RE^n\,,\quad G_{\lambda}^*u\equiv\{(G_{\lambda}^*u)_y\}_{y\in Y}\,, 
\end{equation*}
$$
(G_{\lambda}^*u)_y :=\int_{\RE^3}\frac{e^{-\sqrt \lambda\,|x-y|}}{4\pi|x-y|}\,
u(x)\,dx\,.
$$
Taking $\lambda_{\circ}>\omega=0$, one gets
\begin{align*}
&(M^{\circ}_{\lambda}\xi)_{y}=(\lambda-\lambda_{\circ})(G_{\circ}^*G_{\lambda}\xi)_y=(\tau(G_{\circ}-G_{\lambda})\xi)_y\\
=&\lim_{x\to y}\frac{e^{-\sqrt\lambda_{\circ}\,|x-y|}-e^{-\sqrt\lambda\,|x-y|}}{4\pi|x-y|}\ \xi_y
+\sum_{y'\not=y}\frac{e^{-\sqrt\lambda_{\circ}\,|y-y'|}-e^{-\sqrt \lambda\,|y-y'|}}{4\pi|y-y'|}\,
\xi_{y'}\,
\end{align*}
so that $M^{\circ}_{\lambda}:\RE^n\to\RE^n$, $\lambda>0$, is represented by a
matrix with components 
\begin{equation*}
(M^{\circ}_{\lambda})_{yy'}
=
\begin{cases}\frac{\sqrt\lambda-\sqrt\lambda_{\circ}}{4\pi}&y=y'\\
\frac{e^{-\sqrt\lambda_{\circ}\,|y-y'|}-e^{-\sqrt \lambda\,
|y-y'|}}{4\pi|y-y'|}& y\not=y'\,.\end{cases}
\end{equation*}
For any nonlinear maximal monotone relation $\tilde\Theta\subset\RE^{n}\times\RE^{n}$ by Theorem \ref{estensioni} we get the maximal monotone operator of type $\lambda_{\circ}>0$
$$
(-\Delta)_{\tilde\Theta}:\D(-\Delta_{\tilde\Theta})\subseteq L^{2}(\RE^{3})\to L^{2}(\RE^{3})\,,\quad 
(-\Delta)_{\tilde\Theta}u=-\Delta u_{\circ}-\lambda_{\circ}G_{\circ}\xi_{u}\,,
$$
$$
\D((-\Delta)_{\tilde\Theta})=\{u\in L^{2}(\RE^{3}): u=u_{\circ}+G_{\circ}\xi_{u}\,,\ u_{\circ}\in H^{2}(\RE^{3})\,,\ (\xi_{u},\tau u_{\circ})\in\tilde\Theta\}\,.
$$
Now we give an alternative representation of the nonlinear extensions which are more tied to the linear ones presented in the book \cite{[AGHH]} and which generate, under suitable conditions on the extension parameter $\Theta$, contraction nonlinear semigroups. \par
We define 
$$
G_{0}:\RE^{n}\to L^{2}_{loc}(\RE^{3})\,,\quad [G_{0}\xi](x)=\frac1{4\pi}\sum_{y\in Y}
\frac{\xi_{y}}{|x-y|}\,,
$$
Then $(G_{\circ}
-G_{0})\xi$ belongs to  
$$\tilde H^{2}(\RE^{3})=\{u\in C_{b}(\RE^{3}):\|\nabla u\|\in L^{2}(\RE^{3})\,,\ \Delta u\in L^{2}(\RE^{3})\}\,,$$ and
$$
\Delta (G_{\circ}-G_{0})\xi=\lambda_{\circ}G_{\circ}\xi\,,
$$
so that  
$$
(-\Delta)_{\tilde\Theta}u=-\Delta u_{0}\,,\quad u_{0}:=u_{\circ}+(G_{\circ}-G_{0})\xi\,.
$$
Given $u=u_{0}+G_{0}\xi$, $u_{0}\in \tilde H^{2}(\RE^{3})$, let us now define $\tilde\tau u\equiv\{(\tilde\tau u)_{y}\}_{y\in Y}\in \RE^{n}$  by 
$$
(\tilde\tau u)_{y}:=\lim_{x\to y}\left(u(x)-\frac{1}{4\pi}\,
\frac{\xi_{y}}{|x-y|}\right)\,,
$$
so that 
$$
\tau u_{\circ}=\tilde \tau u_{0}+L^{\circ}\xi\,,
$$
where the symmetric linear operator $L^{\circ}:\RE^{n}\to\RE^{n}$ is represented by a matrix with components
$$
L^{\circ}_{yy'}=
\begin{cases}\frac{\sqrt\lambda_{\circ}}{4\pi}&y=y'\\
-\frac{e^{-\sqrt \lambda_{\circ}\,
|y-y'|}}{4\pi|y-y'|}& y\not=y'\,.\end{cases}
$$
Posing
$$
\Theta:=\tilde\Theta-L^{\circ}\,,\quad M_{\lambda}:=M^{\circ}_{\lambda}+L^{\circ}
$$
we can re-define the extensions by $(-\Delta)_{\tilde\Theta}\equiv(-\Delta)_{\Theta}$ and so one obtains the following result:
\begin{proposition} Let $M_{\lambda}:\RE^{n}\to\RE^{n}$, $\lambda\ge 0$, be represented by the matrix
$$
(M_{\lambda})_{yy'}=\frac1{4\pi}
\begin{cases}\sqrt\lambda&y=y'\\
-\frac{e^{-\sqrt\lambda\,|y-y'|}}{|y-y'|}& y\not=y'\,.\end{cases}
$$
Let $\Theta\subset \RE^{n}\times\RE^{n}$ be a nonlinear maximal monotone relation of type $\gamma_{0}$, where $\gamma_{0}$ is the smallest eigenvalue of $M_{0}$. 
Then 
$$
((-\Delta)_{\Theta}+\lambda)^{-1}=(-\Delta+{\lambda})^{-1}+G_{\lambda}(\Theta+M_{\lambda})^{-1}\circ G_{\lambda}^{*}\,,\quad \lambda>0\,.
$$
is the nonlinear resolvent of the nonlinear maximal monotone operator
$$
(-\Delta)_{\Theta}\,u:=-\Delta u_{0}\,,
$$
$$
\D((-\Delta)_{\Theta})=\{u\in L^{2}(\RE^{3}): u=u_{0}+G_{0}\xi_{u}\,,\ u_{0}\in \tilde H^{2}(\RE^{3})\,,\ (\xi_{u},\tilde\tau u)\in\Theta\}\,.
$$
\end{proposition}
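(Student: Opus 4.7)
My plan is to reduce the statement to Theorem~\ref{estensioni} via the substitution $\tilde\Theta := \Theta + L^{\circ}$, and then translate the resulting formulae from the $(u_{\circ},\xi)$-parameterisation to the $(u_0,\xi)$-parameterisation used in the statement. I first check that $\tilde\Theta$ is maximal monotone: a direct comparison of entries shows that $L^{\circ}$ equals the matrix $M_{\lambda_{\circ}}$, and the identity $M_{\lambda}-M_{\mu}=(\lambda-\mu)\,G_{\mu}^{*}G_{\lambda}$ (inherited from the first resolvent identity \eqref{1st} exactly as in Lemma~\ref{lemmagamma}) together with the positivity of $\lambda G_{0}^{*}G_{\lambda}$ gives that $\lambda\mapsto M_{\lambda}$ is non-decreasing for $\lambda\ge 0$, so $L^{\circ}=M_{\lambda_{\circ}}\ge M_{0}\ge \gamma_{0}$. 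Since $\Theta$ is maximal monotone of type $\gamma_{0}$ and $L^{\circ}$ is bounded, linear, symmetric and $\ge\gamma_{0}$, Lemme~2.4 of \cite{[Br2]} yields that $\tilde\Theta$ is maximal monotone. Theorem~\ref{estensioni} applied to $\tilde\Theta$ then produces the maximal monotone operator $(-\Delta)_{\tilde\Theta}$ of type $\lambda_{\circ}$ with the domain and action described before the proposition, and with $M_{\lambda}^{\tilde\Theta}=\tilde\Theta+(\lambda-\lambda_{\circ})G_{\circ}^{*}G_{\lambda}=\Theta+L^{\circ}+M_{\lambda}^{\circ}=\Theta+M_{\lambda}$, hence
\begin{equation*}
\bigl((-\Delta)_{\tilde\Theta}+\lambda\bigr)^{-1}=(-\Delta+\lambda)^{-1}+G_{\lambda}(\Theta+M_{\lambda})^{-1}\circ G_{\lambda}^{*},\qquad \lambda>\lambda_{\circ}.
\end{equation*}

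Next I identify $(-\Delta)_{\tilde\Theta}$ with the operator in the proposition. For $u=u_{\circ}+G_{\circ}\xi\in\D((-\Delta)_{\tilde\Theta})$ I set $u_{0}:=u_{\circ}+(G_{\circ}-G_{0})\xi$, so that $u=u_{0}+G_{0}\xi$. A direct computation with the explicit Yukawa and Coulomb kernels yields $(G_{\circ}-G_{0})\xi\in\tilde H^{2}(\RE^{3})$ and the distributional identity $-\Delta(G_{\circ}-G_{0})\xi=-\lambda_{\circ}G_{\circ}\xi$ (since $\frac{e^{-\sqrt{\lambda_{\circ}}|x-y|}}{4\pi|x-y|}$ and $\frac{1}{4\pi|x-y|}$ are the Green functions of $-\Delta+\lambda_{\circ}$ and $-\Delta$ respectively); consequently $u_{0}\in\tilde H^{2}$ and $-\Delta u_{0}=-\Delta u_{\circ}-\lambda_{\circ}G_{\circ}\xi$ coincides with the action of $(-\Delta)_{\tilde\Theta}$. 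For the boundary parameter I insert $u=u_{0}+G_{0}\xi$ in the definition of $\tilde\tau u$ and split the limit into the diagonal ($y'=y$) and off-diagonal ($y'\ne y$) contributions; using $\lim_{x\to y}\frac{1-e^{-\sqrt{\lambda_{\circ}}|x-y|}}{4\pi|x-y|}=\frac{\sqrt{\lambda_{\circ}}}{4\pi}$ together with the analogous off-diagonal limits assembles the correction exactly into the matrix $L^{\circ}$, producing the intertwining identity $\tau u_{\circ}=\tilde\tau u+L^{\circ}\xi$. Hence $(\xi,\tau u_{\circ})\in\tilde\Theta=\Theta+L^{\circ}$ if and only if $(\xi,\tilde\tau u)\in\Theta$, which proves that the two operator descriptions coincide as subsets of $L^{2}(\RE^{3})\times L^{2}(\RE^{3})$.

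Finally, to extend the resolvent formula from $\lambda>\lambda_{\circ}$ to every $\lambda>0$, I observe that the $\Theta$-description of $(-\Delta)_{\Theta}$ is independent of the auxiliary parameter $\lambda_{\circ}$: repeating the construction above with any $\lambda_{\circ}'\in(0,\lambda_{\circ}]$ gives the same operator as a maximal monotone operator of type $\lambda_{\circ}'$, and letting $\lambda_{\circ}'\downarrow 0$ yields maximal monotonicity (of type $0$); the resolvent thus exists for every $\lambda>0$ and the formula continues to hold by $\lambda_{\circ}$-independence of both sides. The main technical obstacle is the intertwining identity $\tau u_{\circ}=\tilde\tau u+L^{\circ}\xi$: once the diagonal and off-diagonal contributions of $(G_{\circ}-G_{0})\xi$ at the points of $Y$ are carefully evaluated and matched to the entries of $L^{\circ}$, the rest is a direct reparameterisation of the Kre\u\i n-type framework of Section~3.
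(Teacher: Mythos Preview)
Your argument is correct, and it differs from the paper's only in the final step (improving the type from $\lambda_\circ$ to $0$). The paper verifies monotonicity by a direct Green's-formula computation: writing $u=u_0+G_0\xi_u$, $v=v_0+G_0\xi_v$, integrating $-\Delta(u_0-v_0)$ against $u-v$ and using $-\Delta\frac{1}{4\pi|x-y|}=\delta_y$ produces
\[
\langle(-\Delta)_\Theta(u)-(-\Delta)_\Theta(v),u-v\rangle
\ge[\tilde\tau u-\tilde\tau v,\,\xi_u-\xi_v]+[M_0(\xi_u-\xi_v),\,\xi_u-\xi_v],
\]
and the right-hand side is $\ge 0$ precisely because $\Theta$ is monotone of type $\gamma_0$ and $M_0\ge\gamma_0$. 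This computation is explicit and makes the role of $M_0$ and of the threshold $\gamma_0$ transparent, while your limiting argument in $\lambda_\circ$ is cleaner and uses only the abstract Kre\u\i n framework of Section~3.

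One minor point: you invoke ``the positivity of $\lambda G_0^*G_\lambda$'', but the map $G_0$ of this subsection takes values in $L^2_{loc}(\RE^3)$, so $G_0^*$ is not defined as a bounded operator. What you actually need is $M_{\lambda_\circ}\ge M_0$ for every $\lambda_\circ>0$, and this follows from $M_\lambda-M_\mu=(\lambda-\mu)G_\mu^*G_\lambda\ge 0$ for $\lambda>\mu>0$ (the estimate in the proof of Lemma~\ref{lemmagamma}, applied with $\mu$ in place of $\lambda_\circ$) together with the entrywise continuity $M_\mu\to M_0$ as $\mu\downarrow 0$.
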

\begin{proof} By the previous calculations we know that $(-\Delta)_{\Theta}$ is maximal monotone of type $\lambda_{\circ}>0$ and so we only need to show that $\langle(-\Delta)_{\Theta}(u)-(-\Delta)_{\Theta}(v),u-v\rangle\ge0$. One has
\begin{align*}
&\langle(-\Delta)_{\Theta}(u)-(-\Delta)_{\Theta}(v),u-v\rangle=-\langle\Delta(u_{0}-v_{0}),u-v\rangle\\
=&-\int_{\RE^{3}}\Delta (u_{0}-v_{0})(x)(u_{0}(x)-v_{0}(x))\,dx
-\sum_{y\in Y}\left(\int_{\RE^{3}}\frac{\Delta (u_{0}-v_{0})(x)}{4\pi\,|x-y|}\,dx\right)\,(\xi_{u}-\xi_{v})_{y}\\
= &\int_{\RE^{3}}\|\nabla (u_{0}-v_{0})\|^{2}(x)\,dx+\sum_{y\in Y}(u_{0}-v_{0})(y)(\xi_{u}-\xi_{v})_{y}\\
\ge&\sum_{y\in Y}\left((\tilde\tau(u-v))_{y}-\frac1{4\pi}\sum_{y'\in Y\backslash\{y\}}\frac{(\xi_{u}-\xi_{v})_{y'}}{|y-y'|}\right)(\xi_{u}-\xi_{v})_{y}\\
=&\sum_{y\in Y}((\tilde\xi_{u}-\tilde\xi_{v}+M_{0}(\xi_{u}-\xi_{v}))_{y}(\xi_{u}-\xi_{v})_{y}
\,,
\end{align*}
where $(\xi_{u},\tilde\xi_{u})$ and $(\xi_{v},\tilde\xi_{v})$ belong to $\Theta\subset\RE^{n}\times\RE^{n}$. Thus
the nonlinear extension $(-\Delta)_{\Theta}$ is maximal monotone whenever $\Theta$ is maximal monotone of type $\gamma_{0}$.\end{proof}
\begin{remark}
By the previous theorem, if $Y$ is a singleton then $(-\Delta)_{\Theta}$ generates a contraction nonlinear semigroup $S^{\Theta}_{t}$ for any maximal monotone relation $\Theta\subset\RE\times\RE$. 
Since 
$\Theta=\partial\varphi$ for some proper lower semicontinuous convex function $\varphi:\RE\to (-\infty,+\infty]$ (see \cite[Example 2.8.1]{[Br2]}), $S^{\Theta}_{t}(u)\in\D((-\Delta)_{\Theta})$ for any $t>0$ and for any $u\in \overline{\D((-\Delta)_{\Theta})}$ (for any $u\in L^{2}(\RE^{3})$ in case $(0,0)\in\Theta\,$). 
\end{remark}
\begin{remark} Since $-\Delta (G_{0}\xi)_{y}=\xi_{y}\,\delta_{y}$, where $\delta_{y}$ denotes the Dirac mass at $y$, one has $
(-\Delta)_{\Theta} (u)=-(\Delta u+\sum_{y\in Y}(\xi_{u})_{y}\,\delta_{y})$. In partucular, if $\Theta^{-1}$ is single-valued then, setting $\alpha_{y}(\zeta):=(\Theta^{-1}(\zeta))_{y}$,
one obtains $(-\Delta)_{\Theta} (u)=-(\Delta u+\sum_{y\in Y}\alpha_{y}(\tilde\tau u)\delta_{y})$.
\end{remark}
\end{subsection}
\begin{subsection}{The Laplacian with nonlinear boundary conditions on a bounded domain}\label{E3} Here we follow the same approach provided in \cite[Example 5.5]{[P08]} for the linear case, to which we refer for more details and related references.\par 
Given $\Omega\subset\RE^n$, $n>1$, 
a bounded open set with a boundary $\Gamma$ which is a
smooth embedded sub-manifold 
(these hypotheses could
be weakened), $H^m(\Omega)$ denotes the usual Sobolev-Hilbert
space of functions on $\Omega$ with square integrable partial
(distributional) 
derivatives of any order $k\le m$ and $H^s(\Gamma)$,
$s$ real, denotes the fractional order Sobolev-Hilbert space defined, 
since here $\Gamma$ can be made a smooth compact Riemannian
manifold, as the
completion of $C^\infty(\Gamma)$ with respect to the scalar
product 
$$
\langle f,g\rangle_{H^s(\Gamma)}
:=\langle f,(-\Delta_{LB}+1)^{s}g\rangle_
{L^2(\Gamma)}\,.
$$ 
Here the self-adjoint
operator $\Delta_{LB}$ is the Laplace-Beltrami operator in 
$L^2(\Gamma)$. With such a definition $(-\Delta_{LB}+1)^{s/2}$ 
can be extended to a unitary map, 
which we denote by the same
symbol, 
$$(-\Delta_{LB}+1)^{s/2}:H^{r}(\Gamma)
\to H^{r-s}(\Gamma)\,.
$$
For successive notational convenience we pose
$$
\Lambda:=(-\Delta_{LB}+1)^{1/2}:H^{s}(\Gamma)
\to H^{s-1}(\Gamma)\,,\quad \Sigma:=\Lambda^{-1}\,.
$$
The continuous and surjective linear operator 
$$
\gamma:H^2(\Omega)\to
H^{3/2}(\Gamma)\times
H^{1/2}(\Gamma)\,,\qquad \gamma u:=(\gamma_{0}u,\gamma_{1}u)\,,
$$
is defined as the unique bounded linear operator such that, in the case $u\in C^{\infty}(\bar\Omega)$,  
$$
\gamma_{0}u\,(x):=u\,(x)\,,\quad
\gamma_{1} u\,(x):=n(x){}^{}\!\cdot\!\nabla u\,(x)
\equiv\frac{\partial
  u}{\partial n}\,(x)\,,\quad x\in\Gamma\,.
$$
Here $n$ denotes the inner normal vector on $\Gamma$. The map $\gamma$  can be further extended to a bounded liner operator
$$
\hat\gamma:\D(\Delta^{\!\max})\to H^{-1/2}(\Gamma)\times
H^{-3/2}(\Gamma)\,,\qquad \hat \gamma u
=(\hat\gamma_{0}u,\hat\gamma_{1}u)\,,
$$
where
$$
\D(\Delta^{\!\max}):=\left\{u\in L^2(\Omega)\,:\, \Delta u\in L^2(\Omega)\right\}\,.
$$ 
Now let $A_{\circ}=-\Delta_{D}$ be the self-adjoint operator in $L^2(\Omega)$ given by the
Dirichlet Laplacian
$$
\Delta_{D}: \D(\Delta_{D})\subseteq L^2(\Omega)\to L^2(\Omega)\,\quad 
\Delta_D u=\Delta u\,,
$$
\begin{align*}
\D(\Delta_{D}):=H^2(\Omega)\cap H^1_{0}(\Omega)\,,\quad H^1_{0}(\Omega):=\left\{u\in
H^1(\Omega)\,:\,\gamma_{0}u=0\right\}\,.
\end{align*}
We take $\fh=H^{1/2}(\Gamma)$ and $\tau=\gamma_{1}|\D(\Delta_{D})$. 
Thus we are looking for nonlinear maximal monotone extensions of the strictly positive 
symmetric operator $S=-\Delta^{\!\min}$ given by the minimal Laplacian 
$$\Delta^{\!\min}:\D(\Delta^{\!\min})\subseteq L^2(\Omega)\to
L^2(\Omega)\,,\qquad \Delta^{\!\min}u:=\Delta u\,,$$
\begin{align*}
\D(\Delta^{\!\min}):=\left\{u\in
H^2(\Omega)\,:\,\gamma_{0}u=\gamma_{1}u=0\right\}\,.
\end{align*}
Notice that by defining the maximal Laplacian
$\Delta^{\!\max}$ as the distributional Laplacian
restricted to $\D(\Delta^{\!\max})$,
one has $\Delta^{\!\max}=(\Delta^{\!\min})^*$.\par
Posing $R^{D}_{\lambda}:=(-\Delta_{D}+\lambda)^{-1}$, by \cite[Example 5.5]{[P08]} one has (here $\lambda>\lambda_{\circ}=0$)
$$
M^{\circ}_{\lambda}:H^{1/2}(\Gamma)\to H^{1/2}(\Gamma)\,,\quad 
M^{\circ}_{\lambda}= \lambda\,\gamma_{1}R^D_\lambda K\Lambda\,,
$$
$$
G_{\lambda}:H^{1/2}(\Gamma)\to L^{2}(\Omega)\,,\quad G_{\lambda}=-\Delta_D 
R^D_\lambda K\Lambda\,,
$$
where $K:H^{-1/2}(\Gamma)\to \D(\Delta^{\!\max})$ is the Poisson operator, i.e. $K$ is the continuous linear
operator which solves the  
Dirichlet boundary value problem
\begin{align*}
\begin{cases}
\Delta_{\max} Kf=0\,,& \\
\hat\gamma_{0}\, Kf=f\,.&
\end{cases}
\end{align*} 
Combining the reasonings in \cite[Example 5.5]{[P08]} with Theorem \ref{estensioni} 
one obtains, given any monotone relation $\Theta\subset H^{1/2}(\Gamma)\times H^{1/2}(\Gamma)$, the nonlinear maximal monotone operators $(-\Delta)_{\Theta}$ defined by
$$
(-\Delta)_{\Theta}:\D(-\Delta_{\Theta})\subseteq L^2(\Omega)\to
L^2(\Omega)\,,\qquad (-\Delta)_{\Theta}u=-\Delta u\,,
$$
\begin{align}\label{D}
\D((-\Delta)_{\Theta})
=\left\{u\in
\D(\Delta^{\!\max})\,:\,(\Sigma\hat\gamma_{0}u,\hat\gamma_{1}u-P_{0} \hat\gamma_{0}u)\in\Theta
\right\}
\end{align}
with nonlinear resolvent 
\begin{equation}\label{R}
((-\Delta)_{\Theta}+\lambda)^{-1}
=R^D_\lambda-\Delta_D 
R^D_\lambda K\Lambda(\Theta+\lambda\,\gamma_{1}R^D_\lambda K\Lambda)^{-1}\circ \gamma_{1}R^D_\lambda\,,\quad \lambda>0\,.
\end{equation}
Here 
$$
P_{0} :H^{s}(\Gamma)\to 
H^{s-1}(\Gamma)\,,\quad s\ge -\frac12\,,\quad
P_{0} :=\hat\gamma_{1}\, K\,,
$$ 
is the Dirichlet-to-Neumann operator. The relation $\hat\gamma_{1}-P_{0} \hat\gamma_{0}=\gamma_{1}\Delta_{D}^{-1}\Delta$ shows that $\hat\gamma_{1}u-P_{0} \hat\gamma_{0}u\in H^{1/2}(\Gamma)$ for any $u\in \D(\Delta^{\!\max})$, so that the nonlinear boundary condition appearing in $\D((-\Delta)_{\Theta})$ is well-defined. \par 
Let $\psi:H^{-1/2}(\Gamma)\to(-\infty,+\infty]$ be a proper convex function such that $\Theta=\partial(\psi\circ\Lambda)\subset H^{1/2}(\Gamma)\times H^{1/2}(\Gamma)$ is maximal monotone. Then, by $G_{0}=K\Lambda$ and $\hat\gamma_{0}Kf=f$, Theorem \ref{cyclic} gives $(-\Delta)_{\partial(\psi\circ\Lambda)}=\partial\Phi$, where
$$
\Phi(u):=\begin{cases}\frac12\,\|\nabla(u-K\hat\gamma_{0}u)\|^{2}+\psi(\hat\gamma_{0}u)&u\in\D(\Phi)\\
+\infty&\text{otherwise}\,,
\end{cases}
$$
$$
\D(\Phi):=\{u\in \D(\Delta^{\!\max}):u-K\hat\gamma_{0}u\in H^{1}(\Omega)\,,\ \hat\gamma_{0}u\in\D(\psi)\}\,.
$$
By elliptic regularity, if $u\in\D(\Delta^{\!\max})$ and $\hat\gamma_{0}u\in H^{1/2}(\Gamma)$ then both $u$ and $K\hat\gamma_{0}u$ belong to $H^{1}(\Omega)$. If we further suppose that $\D(\psi)\subseteq H^{1/2}(\Gamma)$ then  $\D(\Phi)\subseteq H^{1}(\Omega)$ and 
$$
\frac12\,\|\nabla u\|^{2}=\frac12\,\|\nabla (u-K\gamma_{0}u)\|^{2}+\frac12\,\|\nabla K\gamma_{0}u\|^{2}=\frac12\,\|\nabla (u-K\gamma_{0}u)\|^{2}+\phi_0 (\gamma_{0}u)\,,
$$
where the proper lower semicontinuous convex function $\phi_0 :L^{2}(\Gamma)\to [0,+\infty]$ is defined by
$$\phi_0 (f):=\begin{cases}-\frac12\,(P_{0}f,f)\,,& f\in H^{1/2}(\Gamma)\\
+\infty\,,&\text{otherwise}\,,
\end{cases}
$$
(here $(\cdot,\cdot)$ denotes the $H^{-1/2}(\Gamma)$-$H^{1/2}(\Gamma)$ duality). Thus one can re-define $\Phi$ by
\begin{equation}\label{PHI1}
\Phi(u):=\begin{cases}\frac12\,\|\nabla u\|^{2}+\varphi(\gamma_{0}u)\,,& u\in\D(\Phi)\\
+\infty\,,&\text{otherwise}\,,
\end{cases}
\end{equation}
$$
\D(\Phi)=\{u\in H^{1}(\Omega):\gamma_{0}u\in\D(\varphi)\}\,,
\qquad
\varphi:=\psi-\phi_0 \,,\quad \D(\varphi)=\D(\psi)\,.
$$
Therefore, setting $\psi:=\varphi+\phi_0 $, we can define a maximal monotone operator $(-\Delta)_{\varphi}\subset \Delta^{\!\max}$ for any proper convex function $\varphi:L^2(\Gamma)\to(-\infty,+\infty]$, $\D(\varphi)\cap H^{1/2}(\Gamma)\not=\emptyset$, such that $\partial((\varphi+\phi_0 )\circ\Lambda)\subset H^{1/2}(\Gamma)\times H^{1/2}(\Gamma)$ is maximal monotone. The next  result provides some sufficient conditions: 
\begin{lemma}\label{suff}
Let $\varphi:L^2(\Gamma)\to(-\infty,+\infty]$, $\D(\varphi)\cap H^{1/2}(\Gamma)\not=\emptyset$, be a proper lower semicontinuous convex function. Then $(\varphi+\phi_0 )\circ\Lambda$ is a  proper lower semicontinuous convex function in  $H^{1/2}(\Gamma)$. Hence 
$\partial((\varphi+\phi_0 )\circ\Lambda)\subset H^{1/2}(\Gamma)\times H^{1/2}(\Gamma)$ is maximal monotone. 
\end{lemma}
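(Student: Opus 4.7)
The plan is to verify that the function $F:=(\varphi+\phi_0)\circ\Lambda$ on $H^{1/2}(\Gamma)$ is proper, convex, and lower semicontinuous; the maximal monotonicity of its sub-differential then follows from the general result, already cited in the paper just before Remark \ref{sub}, that any proper lower semicontinuous convex function on a Hilbert space has maximal monotone sub-differential.

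Properness is immediate: by hypothesis there exists $f\in\D(\varphi)\cap H^{1/2}(\Gamma)$, and then $\xi:=\Sigma f\in H^{3/2}(\Gamma)\subset H^{1/2}(\Gamma)$ satisfies $\Lambda\xi=f\in H^{1/2}(\Gamma)$, so $F(\xi)=\varphi(f)+\phi_0(f)<+\infty$. Convexity of $F$ is inherited from the convexity of $\varphi$ and $\phi_0$ on $L^2(\Gamma)$ (both extended by $+\infty$ off their effective domains) together with the linearity of $\Lambda$.

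The main work lies in lower semicontinuity. Assume $\xi_n\to\xi$ strongly in $H^{1/2}(\Gamma)$ with $F(\xi_n)\to\ell<+\infty$ along a subsequence; then $\Lambda\xi_n\in H^{1/2}(\Gamma)\cap\D(\varphi)$ for all $n$, so $\xi_n\in H^{3/2}(\Gamma)$. The central step is an \emph{a priori} bound on $\|\xi_n\|_{H^{3/2}(\Gamma)}$. To obtain it I would combine three ingredients. First, the G\aa rding-type coercivity
\[
\|g\|_{H^{1/2}(\Gamma)}^{2}\le C\bigl(2\phi_0(g)+\|g\|_{L^2(\Gamma)}^{2}\bigr),\qquad g\in H^{1/2}(\Gamma),
\]
which follows from $\|Kg\|_{H^1(\Omega)}^{2}\sim\|g\|_{H^{1/2}(\Gamma)}^{2}$ combined with the continuity of $K:L^2(\Gamma)\to L^2(\Omega)$; applied to $g=\Lambda\xi_n$ and rewritten via the unitarity of $\Lambda:H^s(\Gamma)\to H^{s-1}(\Gamma)$, this reads $\|\xi_n\|_{H^{3/2}}^{2}\le C\bigl(2\phi_0(\Lambda\xi_n)+\|\xi_n\|_{H^{1}}^{2}\bigr)$. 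Second, the affine minorant $\varphi(g)\ge\langle a,g\rangle_{L^2(\Gamma)}+b$ (for some $a\in L^2(\Gamma)$, $b\in\RE$) available for any proper lower semicontinuous convex function on $L^2(\Gamma)$, which together with $F(\xi_n)\le C$ gives $\phi_0(\Lambda\xi_n)\le C'+\|a\|_{L^2(\Gamma)}\|\xi_n\|_{H^1}$. Third, the interpolation $\|\xi_n\|_{H^1}\le C\|\xi_n\|_{H^{1/2}}^{1/2}\|\xi_n\|_{H^{3/2}}^{1/2}$ together with the boundedness of $\xi_n$ in $H^{1/2}$. The resulting quadratic inequality $\|\xi_n\|_{H^{3/2}}^{2}\le C_1+C_2\|\xi_n\|_{H^{3/2}}^{1/2}+C_3\|\xi_n\|_{H^{3/2}}$ forces $\|\xi_n\|_{H^{3/2}}$ to be uniformly bounded.

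Once $\xi_n$ is bounded in $H^{3/2}(\Gamma)$, a further subsequence converges weakly in $H^{3/2}(\Gamma)$; the weak limit must coincide with the strong $H^{1/2}$-limit $\xi$, so $\xi\in H^{3/2}(\Gamma)$ and $\Lambda\xi_n$ converges weakly to $\Lambda\xi$ in $H^{1/2}(\Gamma)\hookrightarrow L^2(\Gamma)$. The convex $L^2$-lower semicontinuous function $\varphi$ is weakly lower semicontinuous on $L^2(\Gamma)$, so $\varphi(\Lambda\xi)\le\liminf\varphi(\Lambda\xi_n)$; the continuous convex quadratic form $\phi_0$ on $H^{1/2}(\Gamma)$ is weakly lower semicontinuous there, so $\phi_0(\Lambda\xi)\le\liminf\phi_0(\Lambda\xi_n)$. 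Adding the two inequalities yields $F(\xi)\le\ell$, completing the proof. The main obstacle is precisely the $H^{3/2}$ bootstrap on $\xi_n$: it is the only step mixing the ellipticity of the Dirichlet-to-Neumann form, the one-sided control coming from the affine minorant of $\varphi$, and the interpolation bridging the assumed $H^{1/2}$ convergence to the sought $H^{3/2}$ bound.
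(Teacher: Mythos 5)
Your proof is correct, but the key boundedness step is carried out by a genuinely different mechanism than the paper's. The paper works entirely at the $L^2(\Gamma)$ level: it uses the fact that $P_{0}$ has compact resolvent with $\ker(P_{0})=\RE$, so that $\phi_0(g)\ge\mu_{0}\bigl(\|g\|_{L^2}^{2}-\langle g\rangle^{2}\bigr)$ with $\mu_0$ the first positive eigenvalue, controls the mean via $\langle\Lambda f_n\rangle=\langle f_n\rangle$ and the assumed $H^{1/2}$-convergence, and combines this with the affine minorant of $\varphi$ to get only an $L^{2}(\Gamma)$ bound on $\Lambda f_n$; the conclusion then follows from weak lower semicontinuity of both summands on $L^{2}(\Gamma)$. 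You instead bootstrap all the way to an $H^{1/2}(\Gamma)$ bound on $\Lambda\xi_n$ (i.e. an $H^{3/2}$ bound on $\xi_n$) by invoking the two-sided comparison $\|Kg\|_{H^1(\Omega)}\sim\|g\|_{H^{1/2}(\Gamma)}$, the continuity of $K$ on $L^2(\Gamma)$, and the interpolation $\|\cdot\|_{H^1}\le\|\cdot\|_{H^{1/2}}^{1/2}\|\cdot\|_{H^{3/2}}^{1/2}$, closing a quadratic inequality in $\|\xi_n\|_{H^{3/2}}$. This buys a strictly stronger a priori estimate (relative weak compactness in $H^{1/2}$ of sublevel sets of $(\varphi+\phi_0)\circ\Lambda$ intersected with $H^{1/2}$-bounded sets), which is more than the lemma needs, at the price of more elliptic machinery; the paper's spectral-gap argument is shorter and stays one rung lower on the Sobolev scale. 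Note also that your full $\|g\|_{L^2}^2$ term on the right of the coercivity estimate silently absorbs the kernel of $P_0$ (the constants), which is exactly the point the paper handles explicitly through the mean $\langle f_n\rangle$; both treatments are legitimate.
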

\begin{proof} Let $f_{n}\to f$ in $H^{1/2}(\Gamma)$;  without loss of generality we can suppose that the numerical 
sequence $(\varphi+\phi_0 )(\Lambda f_n)$ is bounded. Since the non negative self-adjoint operator $P_{0}:H^{1}(\Gamma)\subseteq L^{2}(\Gamma)\to L^{2}(\Gamma)$ has compact resolvent and ker$(P_{0})=\RE$, denoting by $\mu_{0}>0$ its first positive eigenvalue and setting $\langle f\rangle:=(\text{\rm vol}(\Gamma))^{-1/2}\int_{\Gamma} f(x)\,d\sigma(x)$, one has 
$\phi(\Lambda f_n)\ge \mu_{0}(\|\Lambda f_{n}\|_{L^{2}}^{2}-\langle\Lambda f_{n}\rangle^{2})$. Since $\varphi$ is bounded from below by an affine function (see \cite[Proposition 1.1]{[Barb]}) and $\langle\Lambda f_{n}\rangle^{2}=\langle f_{n}\rangle^{2}\le\|f_{n}\|^{2}_{L^{2}}$, in conclusion $\|\Lambda f_{n}\|_{L^{2}}$ is bounded. Thus, taking a subsequence, we have 
$\Lambda f_{n}\rightharpoonup\Lambda f$ in $L^{2}(\Gamma)$. This conclude the proof since both $\phi_{0}$ and $\varphi$ are lower semicontinuous and hence weakly lower semicontinuous.
\end{proof}
By the previous Lemma we obtain sub-potential maximal monotone realizations of the Laplacian with nonlinear Robin-type boundary conditions: 
\begin{proposition}\label{robin} Let $\varphi:L^2(\Gamma)\to(-\infty,+\infty]$ be a proper lower semicontinuous convex function such that 
\text{\rm int$(\D(\varphi))\cap H^{1/2}(\Gamma)\not=\emptyset$}. Then
$$
(-\Delta)_\varphi:\D((-\Delta)_\varphi)\subseteq L^{2}(\Omega)\to L^{2}(\Omega)\,,\quad
(-\Delta)_\varphi(u):=-\Delta u\,,
$$
$$
\D((-\Delta)_{\varphi})=\left\{u\in \D(\Delta^{\!\max})\cap H^{3/2}(\Omega): (\gamma_{0}u,\gamma_{1}u)\in \partial\varphi\right\}
$$ 
is sub-potential maximal monotone, 
$$
(-\Delta)_\varphi=\partial\Phi\,, \quad \Phi(u)=\frac12\,\|\nabla u\|^{2}+\varphi(\gamma_{0}u)\,,\quad \D(\Phi)=\{u\in H^{1}(\Omega):\gamma_{0}u\in\D(\varphi)\}\,.
$$
Its nonlinear resolvent is given by 
$$
((-\Delta)_{\varphi}+\lambda)^{-1}
=R^D_\lambda-\Delta_D 
R^D_\lambda K(\partial\varphi-P_{\lambda})^{-1}\circ \gamma_{1}R^D_\lambda\,,\quad \lambda>0\,,
$$
where the Dirichlet-to-Neumann operator $P_{\lambda}$ is defined by 
$$
P_{\lambda}:=P_{0}-\lambda\,\gamma_{1}R^D_\lambda K\,.
$$
\end{proposition}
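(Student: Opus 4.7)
The plan is to reduce the proposition to Theorem \ref{cyclic} via Lemma \ref{suff}. I would set $\psi := \varphi + \phi_0$ and $\Theta := \partial(\psi\circ\Lambda)$. Since the interior hypothesis on $\varphi$ gives $\D(\varphi)\cap H^{1/2}(\Gamma)\neq\emptyset$, Lemma \ref{suff} guarantees that $\Theta$ is a sub-potential maximal monotone relation in $H^{1/2}(\Gamma)\times H^{1/2}(\Gamma)$. Being $S=-\Delta^{\!\min}>0$, I take $\lambda_\circ=0$; the transversality condition $\text{range}(G_0)\cap\D((-\Delta_D)^{1/2})=\{0\}$ required by Theorem \ref{cyclic} holds because $\text{range}(G_0)=K\Lambda H^{1/2}(\Gamma)=KH^{-1/2}(\Gamma)$ consists of harmonic functions, whose intersection with $H^1_0(\Omega)=\D((-\Delta_D)^{1/2})$ is trivial. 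Theorem \ref{cyclic} therefore yields $(-\Delta)_\Theta=\partial\Phi_{\mathrm{abs}}$, where $\Phi_{\mathrm{abs}}$ is the functional of \eqref{PHI}.

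Next I would identify $\Phi_{\mathrm{abs}}$ with the concrete functional \eqref{PHI1}. Writing $u=u_\circ+K\Lambda\xi$, the identities $\gamma_0 u_\circ=0$ and $\hat\gamma_0(K\Lambda\xi)=\Lambda\xi$ give $\hat\gamma_0 u=\Lambda\xi$ and $u_\circ=u-K\hat\gamma_0 u$, and the orthogonality relation recorded just before Lemma \ref{suff},
\[
\tfrac12\|\nabla u\|^2=\tfrac12\|\nabla u_\circ\|^2+\tfrac12\|\nabla K\hat\gamma_0 u\|^2=\tfrac12\|\nabla u_\circ\|^2+\phi_0(\hat\gamma_0 u),
\]
converts $\Phi_{\mathrm{abs}}(u)=\tfrac12\|\nabla u_\circ\|^2+\psi(\hat\gamma_0 u)$ into $\Phi(u)=\tfrac12\|\nabla u\|^2+\varphi(\gamma_0 u)$ on the asserted domain.

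To extract the boundary condition I would unwind \eqref{D}: the condition $(\Sigma\hat\gamma_0 u,\hat\gamma_1 u-P_0\hat\gamma_0 u)\in\partial(\psi\circ\Lambda)$, combined with the chain-rule equivalence $(\xi,\tilde\xi)\in\partial(\psi\circ\Lambda)\iff\tilde\xi\in\partial\psi(\Lambda\xi)$ (obtained by substituting $f=\Lambda\xi$, $g=\Lambda\eta$ into the defining sub-gradient inequality and using $[\tilde\xi,\xi-\eta]_{H^{1/2}}=(\tilde\xi,\Lambda(\xi-\eta))_{L^2}$), becomes $\hat\gamma_1 u-P_0\hat\gamma_0 u\in\partial\psi(\hat\gamma_0 u)$. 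The interior hypothesis on $\D(\varphi)$ together with the continuity of $\phi_0$ on $H^{1/2}(\Gamma)$ then permit the Moreau--Rockafellar additivity $\partial\psi=\partial\varphi+\partial\phi_0=\partial\varphi-P_0$, reducing the condition to $\hat\gamma_1 u\in\partial\varphi(\hat\gamma_0 u)$; since $\partial\varphi\subset L^2(\Gamma)\times L^2(\Gamma)$ forces $\hat\gamma_0 u,\hat\gamma_1 u\in L^2(\Gamma)$, elliptic regularity promotes $u$ to $H^{3/2}(\Omega)$ and hence $\hat\gamma_i u=\gamma_i u$, so that the stated boundary condition $(\gamma_0 u,\gamma_1 u)\in\partial\varphi$ holds.

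Finally, inserting $\Theta=(\partial\varphi-P_0)\circ\Lambda$ and $\lambda\gamma_1 R^D_\lambda K=P_0-P_\lambda$ (essentially the definition of $P_\lambda$) into \eqref{R} gives $(\Theta+\lambda\gamma_1 R^D_\lambda K\Lambda)^{-1}=\Sigma\circ(\partial\varphi-P_\lambda)^{-1}$, and $K\Lambda\Sigma=K$ then collapses \eqref{R} into the stated formula. The main obstacle will be the third step: rigorously transporting $\partial$ across the isomorphism $\Lambda$ and justifying the additive splitting $\partial(\varphi+\phi_0)=\partial\varphi+\partial\phi_0$. The interior condition on $\D(\varphi)$ is imposed precisely to invoke the standard Moreau--Rockafellar criterion for this latter splitting.
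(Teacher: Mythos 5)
Your argument follows the paper's proof essentially step for step: reduction to Theorem \ref{cyclic} via Lemma \ref{suff} with $\psi=\varphi+\phi_0$, the chain-rule identity $\partial(\psi\circ\Lambda)=(\partial\psi\circ\Lambda)\cap(H^{1/2}(\Gamma)\times H^{1/2}(\Gamma))$, the Moreau--Rockafellar (Brezis, Corollaire 2.11) splitting $\partial(\varphi+\phi_0)=\partial\varphi-P_0$ justified by $\text{int}(\D(\varphi))\cap H^{1/2}(\Gamma)\neq\emptyset$, elliptic regularity for the $H^{3/2}(\Omega)$ membership, and the substitution into \eqref{R} and \eqref{PHI1}. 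The proposal is correct and matches the paper's route; your added verifications (the transversality of $\text{range}(G_0)$ with $H^1_0(\Omega)$ and the explicit collapse $K\Lambda\Sigma=K$ in the resolvent) only make explicit what the paper leaves implicit.
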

\begin{proof} At first let us notice that given a proper convex function $\psi:L^{2}(\Gamma)\to(-\infty,+\infty]$ and considering the proper convex function   $\psi\circ\Lambda: H^{1/2}(\Gamma)\to(-\infty,+\infty]$ with domain $\D(\psi\circ\Lambda):=\{f\in H^{1}(\Gamma):\Lambda f\in \D(\psi)\}$, one gets $$\partial(\psi\circ\Lambda)=(\partial\psi\circ\Lambda)\cap (H^{1/2}(\Gamma)\times H^{1/2}(\Gamma))\,,
$$ where $(\partial\psi\circ\Lambda)(f ):=\partial\psi(\Lambda f)$. Thus, by  int$(\D(\varphi))\cap H^{1/2}(\Gamma)=\emptyset$,  and by \cite[Corollarie 2.11]{[Br2]}, one obtains 
\begin{align*}
\partial((\varphi+\phi_0 )\circ\Lambda)=&(\partial\varphi\circ\Lambda+\partial\phi_0 \circ\Lambda)\cap (H^{1/2}(\Gamma)\times H^{1/2}(\Gamma))\\
=&
(\partial\varphi\circ\Lambda-P_{0}\Lambda)\cap (H^{1/2}(\Gamma)\times H^{1/2}(\Gamma))\,.
\end{align*} 
Then, by \eqref{D}, one gets $\hat\gamma_{0}u\in\D(\partial\varphi-P_{0})=\D(\partial\varphi)\cap H^{1}(\Gamma)$ and $(\hat\gamma_{0}u,\hat\gamma_{1}u)\in\partial\varphi\subset \D(\varphi)\times L^{2}(\Gamma)$. So, by elliptic regularity, 
$u\in H^{3/2}(\Omega)$. The proof is then concluded by \eqref{R} and \eqref{PHI1}.
\end{proof}
\begin{remark}
If $j:\RE\to (-\infty,+\infty]$ is a proper, convex, lower semicontinuous function, then
$$
\varphi:L^{2}(\Gamma)\to (-\infty,+\infty]\,,
\qquad 
\varphi(f):=\begin{cases}\int_{\Gamma}j(f(x))\,d\sigma(x) \,,& j(f)\in L^{1}(\Gamma)\\
+\infty\,,&\text{otherwise}\,,
\end{cases}
$$
is proper, convex and  lower semicontinuous and $(f,\tilde f)\in\partial\varphi$ if and only if 
$\tilde f(x)\in\partial j(f(x))$ for a.e. $x\in\Gamma$ (see \cite[Appendice I]{[Br1]}). If $|j(s)|\le c\, |s^{2}|$, then $\D(\varphi)=L^{2}(\Gamma)$ and so, by Proposition \ref{robin}, one obtains nonlinear, local Robin-type nonlinear boundary conditions,
$$
\D((-\Delta)_{\varphi})=\left\{u\in \D(\Delta^{\!\max})\cap H^{3/2}(\Omega): \gamma_{1}u(x)\in \partial j(\gamma_{0}u(x))\ \text{for a.e. $x\in\Gamma$} \right\}\,,
$$
and $(-\Delta)_{\varphi}$ belongs to the class of nonlinear maximal monotone operators studied in \cite[Section I.2]{[Br1]}. For such a kind of nonlinear local boundary conditions, by \cite[Th\'eor\`eme I.10]{[Br1]} (see also \cite[Proposition 2.9]{[Barb]}), one has $\D((-\Delta)_{\varphi})\subseteq H^{2}(\Omega)$. \par The situation $\D(\varphi)=L^{2}(\Gamma)$ is typical; for example $j(s)=b\, 1_{(0,+\infty)}(s)\,s^{2}$, $b>0$, gives the free boundary Cauchy problem (appearing in temperature control problems, see \cite[Chapter II]{[DuLi]}):
$$
\begin{cases}
\frac{\partial}{\partial t}u(t,x)=\Delta u(t,x)\,,&(t,x)\in(0,+\infty)\times\Omega\\
u(0,x)=u_{0}(x)\,,& x\in\Omega\\
\frac{\partial \,}{\partial n}u(t,x)=b\,u(t,x)\,,&(t,x)\in\Gamma(u)\\
\frac{\partial \,}{\partial n}u(t,x)=0\,,& (t,x)\notin \Gamma(u)\,,\\
\end{cases}
$$
where $\Gamma(u):=\{(t,x)\in(0,+\infty)\times\Gamma:u(t,x)>0\}$.
\end{remark}

\begin{remark}\label{lr}
If ${\mathfrak m}\not=\emptyset$, where
\begin{align*}
&\ {\mathfrak m}:=
\{f\in H^{1}(\Gamma):(f,P_{0}f)\in \partial\varphi\}
=
\{f\in H^{1}(\Gamma):\text{$f$ is a minimum point of $\varphi+\phi_0 $}\}
\,,
\end{align*} 
then, denoting by $S^{\varphi}_{t}$ the nonlinear semigroup of contractions generated by 
$(-\Delta)_{\varphi}$, by Remark \ref{Remark 4.6} one has
\begin{equation*}
\forall u\in\overline{\D((-\Delta)_{\varphi})}\,,\ \exists f_{u}\in{\mathfrak m}\ :\quad \text{w-}\lim_{t\to+\infty}S^{\varphi}_t(u)=u_{\infty}\,,
\end{equation*}
where $u_{\infty}=Kf_{u}$ is the unique harmonic function in $\Omega$ such that $ \gamma_{0}u_{\infty}=f_{u}$. If $\varphi$ is an even function then the above limit holds in strong sense.
\end{remark}
\begin{remark}\label{linear} In recent years there has been a renovated interest for the connection between theory of self-adjoint
extensions and boundary conditions for partial differential operators due to its re-formulation
in terms of Kre\u\i n's resolvent formula (see e.g. \cite{[BL1]}, \cite{[P08]}, \cite{[BMNW]}, \cite{[Gru]}, \cite{[BGW]}, \cite{[Mal]}, \cite{[GeMi]}, \cite{[BL2]} and
references therein). Proposition \ref{robin} provides a non-linear extension of such kind of results: if $\varphi(f)=\frac12\,\langle Bf,f\rangle_{L^{2}(\Gamma)}$, where $B$ is a symmetric bounded linear operator in $L^{2}(\Gamma)$, then one obtains the (non local) linear Robin-type  boundary conditions $\gamma_{1}u=B\,\gamma_{0}u$.
\end{remark}
\begin{remark}
Let $\varphi:L^{2}(\Gamma)\to[0,+\infty]$, 
 $\D(\varphi)\cap H^{1/2}(\Gamma)\not=\emptyset$, be a proper lower semicontinuous convex function and let us suppose that the corresponding $\Phi: L^{2}(\Omega)\to [0,+\infty]$ 
 given in \eqref{PHI1} is densely defined. Let us further suppose that
\begin{equation}\label{op}
\varphi(f\wedge g)+\varphi(f\vee g)\le \varphi(f)+\varphi(g)\,.
\end{equation}
Here $a\wedge b:=\min\{a,b\}$ and $a\vee b:=\max\{a,b\}$. Then, proceeding as in the linear case considered in \cite{[P12]}, by $\gamma_{0}(f\wedge g)=\gamma_{0}f\wedge \gamma_{0}g$ and $\gamma_{0}(f\vee g)=\gamma_{0}f\vee \gamma_{0}g$, one can check that 
$$
\Phi(u\wedge v)+\Phi(u\vee v)\le \Phi(u)+\Phi(v)\,.
$$
Thus, by \cite[Th\'eor\`eme 2.1]{[Bart]}, the contraction nonlinear semigroup $S_{t}^{\varphi}: L^{2}(\Omega)\to L^{2}(\Omega)$ generated by $(-\Delta)_{\varphi}:=\partial\Phi$ is order preserving, i.e.
$$
u,v\in L^{2}(\Omega)\,,\ u\le v\quad\Longrightarrow\quad \forall t\ge 0\,,\ S_{t}^{\varphi}(u)\le S_{t}^{\varphi}(v)\,.
$$ 
Similarly, if $\varphi$ has the property
\begin{equation}\label{cont}
\forall\alpha>0\,,\quad\varphi(g+ p_{\alpha}(f, g))+ \varphi(f - p_{\alpha}(f, g))
\le \varphi(f)+\varphi(g)\,,
\end{equation}
where 
$$
p_{\alpha}(f, g) := \frac12\,
((f - g +\alpha)_{+} -(f - g -\alpha)_{-})\,,\quad
$$
then, by \cite[Theorem 1.4]{[CG1]}, \cite[Theorem 3.6]{[CG2]}, the semigroup $S^{\varphi}_{t}$ is a contraction in $L^{\infty}(\Omega)$, i.e. 
$$
\forall t\ge 0\,,\forall u,v\in L^{\infty}(\Omega)\,,\quad  \|S_{t}^{\varphi}(u)-S_{t}^{\varphi}(v)\|_{L^{\infty}}\le \|u-v\|_{L^{\infty}}\,.
$$
In conclusion the nonlinear semigroup $S^{\varphi}_{t}$, $t\ge 0$, is Markovian whenever \eqref{op} and \eqref{cont} hold. Equivalently (similarly to the linear case, see \cite{[P12]}) , $S^{\varphi}_{t}$, $t\ge 0$, is a nonlinear Markovian semigroup in $L^{2}(\Omega)$ whenever $\partial\varphi$ generates a  nonlinear Markovian semigroup in $L^{2}(\Gamma)$.
\end{remark}

\end{subsection}
\begin{subsection}{Laplacians with nonlinear singular perturbations supported on $d$-sets}\label{E2} Here we follow an approach similar to the one provided (in a linear framework) in \cite[Example 3.6]{[P01]}. A closed Borel set $N\subset\RE^n$ is called a $d$-set, $0<d\le n$, if 
$$
\exists\, c_1,\,c_2>0\ :\ \forall\, x\in N,\ \forall\,r\in(0,1),\quad
c_1r^d\le\mu_d(B_r(x)\cap N)\le c_2r^d\ ,
$$
where $\mu_d$ is the $d$-dimensional Hausdorff measure and $B_r(x)$ is
the closed $n$-dimensional ball of radius
$r$ centered at 
the point $x$ (see \cite[Section 1.1, Chapter VIII]{[JoWa]}). 
Examples of $d$-sets for $d$ integer are finite unions of $d$-dimensional
Lipschitz sub-manifolds  and, in the not integer case, 
self-similar fractals of Hausdorff dimension $d$ (see \cite[Chapter II, Example 2]{[JoWa]}). 
\par
Now let $A_{\circ}=-\Delta_{D}:\D(\Delta_{D})\subseteq L^{2}(\Omega)\to L^{2}(\Omega)$ be the Dirichlet Laplacian as in the previous example and let 
$N\subset\Omega$, $N\cap \Gamma=\emptyset$,  be a compact $d$-set with $2<n-d<4$. 
(for simplicity of exposition we do not consider the case with lower co-dimension; this would require either a more involved definition of the trace spaces or a more regular set $N$).  Then we take $\tau=\gamma_{N}:=\tilde\gamma_{N}E|\D(\Delta_{D})$, where   
$$
E:H^{2}(\Omega)\to H^{2}(\RE^{d})
$$
is the extension map
and
$$
\tilde\gamma_N: H^2(\RE^{d})\to H^s(N)\,,\qquad s=2-\,\frac{n-d}{2}
$$
is the unique linear continuous and surjective map which coincides on smooth functions with the evaluation at the set $N$. We refer to \cite[Chapter 3, Theorems 1 and 3]{[JoWa]}, for the existence of the map $\tilde\gamma_N$. Here $H^{s}(N)$, $0<s<1$, is defined as the Hilbert space of functions $f\in L^2(N;\mu_N)$ such that $\|f\|_{H^{s}(N)}<+\infty$, where
$$
\|f\|^2_{H^{s}(N)}:=
\|f\|^2_{L^2(N;\mu_{N})}+\int_{|x-y|<1}
\frac{|f(x)-f(y)|^2}{|x-y|^{d+2s}}\,d\mu_N(x)\,d\mu_N(y)\,.
$$
Here $\mu_N$ denotes the restriction of the $d$-dimensional
Hausdorff measure $\mu_d$ to the set $N$. \par
Given $f\in H^{s}(N)$, let $f\delta_{N}\in H^{-2}(\Omega)$ be the distribution with compact support supp$(f
\delta_{N})=N$ defined by
$$
(f\delta_N,u)=\langle f,\gamma_N u\rangle_{H^{s}(N)}\,.
$$
Here $H^{-2}(\Omega)$ denotes the dual of $H^{2}(\Omega)$ with respect to the extension $(\cdot,\cdot)$ of the scalar product in $L^{2}(\Omega)$.
Given $\lambda\ge 0$, let $R^{D}_{\lambda}:=(-\Delta_{D}+\lambda)^{-1}$ and define 
$$
\tilde R^{D}_{\lambda}: H^{-2}(\Omega)\to L^{2}(\Omega)
$$
by 
$$
\langle\tilde R^{D}_{\lambda}\nu,u\rangle= (\nu, R^{D}_{\lambda}u)\,,\quad \nu\in H^{-2}(\Omega)\,,\ u\in L^{2}(\Omega)\,.
$$
Then
$$
G_\lambda: H^{s}(N)\to L^2(\RE^n)\,,\qquad
G_\lambda f:=\tilde R^{D}_{\lambda} (f\delta_N) \,.
$$ 
Therefore, given  any nonlinear maximal monotone relation $\Theta\subset H^{s}(N)\times H^{s}(N)$, by Theorem \ref{estensioni} one gets a nonlinear maximal monotone operator $(-\Delta)_{\Theta}$ defined by 
$$(-\Delta)_{\Theta}:\D((-\Delta)_\Theta)\subseteq L^{2}(\Omega)\to L^{2}(\Omega)\,,\quad 
(-\Delta)_{\Theta}\,u=-\Delta_{D} u_{0}\,,
$$
\begin{align*}
\D((-\Delta)_\Theta)
:=
\left\{u\in L^2(\Omega): u
=u_{0}+\tilde R^{D}_{0}(f_{u}\delta_{N}),\ u_{0}\in H^2(\Omega)\cap H_{0}^{1}(\Omega),\ (f_{u},\gamma_{N}u_{0})\in\Theta\right\}
\end{align*}
with nonlinear resolvent 
$$
((-\Delta)_{\Theta}+\lambda)^{-1}(u)=(-\Delta_{D}+\lambda)^{-1}u+\tilde R^{D}_{\lambda}(
(\Theta-\lambda\gamma_{N}\Delta_D^{-1}G_{\lambda})^{-1}(\gamma_{N}(-\Delta_{D}+\lambda)^{-1}u)\delta_{N})\,.
$$
Notice that $(-\Delta)_{\Theta}$ can be alternatively defined by
$$
(-\Delta)_{\Theta}(u):=-(\Delta u+f_{u}\delta_{N})
$$
and so, in the case $\alpha:=\Theta^{-1}$ is single-valued, one has
$$
(-\Delta)_{\Theta}(u)=-(\Delta u+
\alpha(\gamma_{N} u_{0})\delta_N)\,.
$$
If $\Theta=\partial\varphi$, where $\varphi:H^{s}(N)\to(-\infty,+\infty]$ is a proper lower semicontinuous function (notice that ran$(G_{0})\cap H_{0}^{1}(\Omega)=\{0\}$ whenever $n-d>1$), then 
$(-\Delta)_{\Theta}=\partial\Phi$, where
$$
 \Phi(u):=\begin{cases}\frac12\,\|(-\Delta_{D})^{\frac12}u_{0}\|^{2}
+\varphi(f_{u})&u\in\D(\Phi)\\
+\infty&\text{otherwise},
\end{cases}
$$
and
$$
\D(\Phi):=\{u\in L^{2}(\Omega):u=u_{0}+\tilde R^{D}_{0}(f_{u}\delta_{N})\,,\ u_{0}\in H_{0}^{1}(\Omega)
\,,\ \varphi(f_{u})<+\infty\}\,.
$$  
\begin{remark}
Examples \ref{E3} and \ref{E2} can be combined by taking $A_{\circ}=-\Delta_{D}$ and
$$
\tau:\D(\Delta_{D})\to H^{1/2}(\Gamma)\oplus H^{s}(N)\,,\quad 
\tau u:=\gamma_{1}u\oplus\gamma_{N}u\,.
$$
In this case Theorem \ref{estensioni} provides maximal monotone extensions describing Laplacians with nonlinear boundary conditions at $\Gamma$ and nonlinear singular perturbations supported at $N$.
\end{remark}
\end{subsection}

\end{document}